\newtheorem{theorem}{Theorem}[section]
\newtheorem*{theorem*}{Theorem}
\newaliascnt{lemma}{theorem}
\newtheorem{lemma}[lemma]{Lemma}
\newaliascnt{corollary}{theorem}
\newtheorem{corollary}[corollary]{Corollary}
\newaliascnt{proposition}{theorem}
\newtheorem{proposition}[proposition]{Proposition}
\theoremstyle{definition}
\newaliascnt{remark}{theorem}
\newtheorem{remark}[remark]{Remark}
\newaliascnt{definition}{theorem}
\newtheorem{definition}[definition]{Definition}
\newaliascnt{notation}{theorem}
\newtheorem{notation}[notation]{Notation}
\newaliascnt{example}{theorem}
\newtheorem{example}[example]{Example}
\newcommand{\fs}{\Pi_1(X_{\gamma(0)})}
\newcommand{\fe}{\Pi_1(X_{\gamma(1)})}
\newcommand{\fpt}{\Pi_1(\Gamma^*X)}
\newcommand{\fpts}{\Pi_1(\Gamma'^*X)}
\newcommand{\fh}{\Pi_1(H^*X)}
\newcommand{\fsl}{\Pi_1(X_{\gamma(0)}, D_{\gamma(0)})}
\newcommand{\fel}{\Pi_1(X_{\gamma(1)}, D_{\gamma(1)})}
\newcommand{\fml}{\Pi_1(X_{\gamma(t)}, D_{\gamma(t)})}
\newcommand{\fptl}{\Pi_1(\Gamma^*X, \log \Gamma^*D)}
\newcommand{\fptsl}{\Pi_1(\Gamma'^*X, \log \Gamma'^*D)}
\newcommand{\fhl}{\Pi_1(H^*X, \log H^*D)}
\newcommand{\ffl}{\Pi_1(X_\tau, D_\tau)}
\title[Categorified isomonodromic deformations via Lie groupoids]{Categorified isomonodromic deformations via Lie groupoids I: logarithmic singularities}
\author{Waleed Qaisar}
\begin{document}

\begin{abstract}
    We upgrade the classical operation of \textit{isomonodromic deformations} along a path $\gamma$ to a functor $\mathbb{P}_{\gamma}$ between categories of flat connections with logarithmic singularities along a divisor $D$, which itself depends functorially on $\gamma$, using tools from the theory of Lie groupoids. As applications, (1) we get that isomonodromy gives a map of moduli \textit{stacks} of flat connections with logarithmic singularities, (2) we encode higher homotopical information at level 2, i.e. we get an action of the fundamental 2-groupoid of the base of our family on the categories of logarithmic flat connections on the fibres, and (3) our methods produce a geometric incarnation of the isomonodromy functors as Morita equivalences which are more primary than the isomonodromy functors themselves, and from which they can be formally extracted by passing to representation categories.
\end{abstract}

\maketitle

\tableofcontents

\section{Introduction}\label{sec: intro}

\subsection{Background}

Flat connections on a complex algebraic variety or a complex manifold are subject to a remarkable operation of \textit{isomonodromic deformation} or \textit{isomonodromy}  as one varies the complex structure on the space. A classical example of isomonodromy is given by parametrized families of ordinary differential equations (ODEs), whose monodromy remains constant as we vary the parameters; the Painlev\'e equations, for example, are produced in this set-up.

More generally, given a family of complex varieties or manifolds $X \to S$, and a path $\gamma$ in $S$, one deforms a flat connection $\nabla_0$ on a fibre $X_{\gamma(0)}$ along $\gamma$ to a flat connection $\nabla_1$ on another fibre $X_{\gamma(1)}$ while keeping the underlying monodromy representation associated to the flat connection constant - $\nabla_1$ is then called the \textit{isomonodromic deformation} of $\nabla_0$ to $X_{\gamma(1)}$. This gives a map of moduli spaces of flat connections $\mathcal{M}_{dR}(X_{\gamma(0)}) \to \mathcal{M}_{dR}(X_{\gamma(1)})$ depending only on the homotopy class of $\gamma$. This operation becomes even more interesting when one is deforming flat connections with \textit{singularities} along a divisor $D$ (in fact the Painlev\'e equations are actually produced in \textit{this} context), either reduced (corresponding to logarithmic singularities) or non-reduced (corresponding to irregular singularities).

\subsection{The problem of producing functorial isomonodromy}
Seemingly irrelevant choices have to be made for this construction, even in the nonsingular case: to isomonodromically deform $\nabla_0$, one \textit{picks a lift} $\alpha$ of the path $\gamma$ from $S$ to $X$ (which connects the fibres $X_{\gamma(0)}$ and $X_{\gamma(1)}$), conjugates by this lift to produce an isomorphism of the fundamental groups of the fibres, and then pulls back the associated monodromy representation of $\nabla_0$ by this isomorphism to get a representation of $\pi_1(X_{\gamma(1)}, \alpha(1))$, which by the Riemann-Hilbert correspondence gives a flat connection $\nabla_1$ on $X_{\gamma(1)}$. As a result of making this choice of a lift of path, from this perspective isomonodromy cannot be understood as a functor between the categories of flat connections:

\begin{center}
FlatConn$(X_{\gamma(0)}) \to$ FlatConn$(X_{\gamma(1)})$
    
\end{center}

In fact, there is even more structure present here that we would like to encode - these functors should themselves depend functorially on the path $\gamma$ picked in the base.

The problem becomes worse when one considers the singular case, as the extra choices involved are more complicated than a choice of lift of the path $\gamma$. As a consequence of not being able to upgrade isomonodromy to a functor, we cannot from this classical point of view understand isomonodromy as giving maps of moduli \textit{stacks} of flat connections with singularities along $D$ - the above classical picture only gives maps of coarse moduli spaces.

In this paper, we solve the problem of upgrading isomonodromy for logarithmic flat connections (i.e. for reduced $D$) on complex curves to a functor associated to $\gamma$, which itself depends functorially on $\gamma$. The case of irregular singularities will be taken up in a sequel paper. To explain the ideas involved, we will first do the same for the nonsingular case (i.e. for $D$ being empty). In fact, in the nonsingular case our theorem applies more generally to the fibres being complex manifolds of any dimension, rather than just complex curves. We will set this up so that the arguments in the nonsingular case can be straightforwardly converted to the arguments in the logarithmic case by replacing the notion of fundamental groupoid $\Pi_1(X)$ by \textit{twisted} fundamental groupoid $\Pi_1(X,D)$. As an application, we will actually encode more information than could be seen previously: we associate to every actual path $\gamma$ a functor between the categories of flat connections on the fibre (previously the map of moduli spaces was associated only to the homotopy class $[\gamma]$), and to every (homotopy class of) homotopy $\gamma \overset{H}{\sim} {\gamma'}$ a natural isomorphism $\eta_H$ of these functors, thus getting an action of $\Pi_{\leq 2}(S)$ on flat connections on the fibres of the family. This will be packaged into the following main theorem:

\begin{theorem}\label{thm: main theorem}

    a) Let $(X,D) \to S$ be a family of complex curves with divisor, given by a holomorphic submersion $p:X \to S$ with 1-dimensional fibres which is a topological fibre bundle (for example a proper submersion), and $D$ a reduced smooth divisor in $X$ such the restriction $p|_D: D \to S$ is a covering map. From this we can produce a 2-functor $\mathbb{P}: \Pi_{\leq 2}(S) \to \text{LieGrpd}[\text{Mor}^{-1}]$ given by the diagram below, where the codomain is the 2-category of Lie groupoids with Morita equivalences as the 1-morphisms.
    
    b) When this 2-functor is post-composed with the canonical 2-functor \\ $\text{Rep}: \text{LieGrpd}[\text{Mor}^{-1}] \to \text{Cat}$ taking a Lie groupoid to its category of representations, we get isomonodromic deformations of logarithmic flat connections with singularities along $D$.
    
    \noindent In the case that $D$ is empty, this recovers nonsingular isomonodromic deformations.
    
\end{theorem}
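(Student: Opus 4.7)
The plan is to produce the 2-functor $\mathbb{P}$ via a span construction and then read off the isomonodromy functors by applying Rep. For a path $\gamma: [0,1] \to S$, I pull back the family to obtain $p_\gamma: \Gamma^*X \to [0,1]$ and form the span of twisted fundamental groupoids
\[
\fsl \xleftarrow{(i_0)_*} \fptl \xrightarrow{(i_1)_*} \fel
\]
induced by the two endpoint fibre inclusions $i_0$ and $i_1$. The crucial first step is to show that $(i_0)_*$ and $(i_1)_*$ are Morita equivalences of Lie groupoids, so that this span defines a 1-morphism in $\text{LieGrpd}[\text{Mor}^{-1}]$. The underlying topological picture is clear: $\Gamma^*X \to [0,1]$ is a topological fibre bundle over a contractible base, hence each fibre inclusion is a homotopy equivalence; the hypothesis that $p|_D$ is a covering map ensures that $\Gamma^*D$ is a trivial finite cover of $[0,1]$, which is what makes the \emph{twisted} fundamental groupoids also equivalent. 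Upgrading this to a Morita equivalence in the Lie category requires constructing an explicit smooth bibundle from the local-triviality data of the bundle, using that $p$ is a holomorphic submersion.

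To verify functoriality under concatenation, consider a composite $\gamma \cdot \gamma'$ and observe that $(\gamma \cdot \gamma')^*X$ decomposes as $\Gamma^*X$ glued to $\Gamma'^*X$ along their common middle fibre $X_{\gamma(1)} = X_{\gamma'(0)}$. A Seifert--van Kampen-type decomposition for the twisted fundamental groupoid then exhibits the span associated to $\gamma \cdot \gamma'$ as Morita equivalent to the bibundle composition of the spans for $\gamma$ and $\gamma'$, and identity paths give the identity bibundle by construction. For 2-morphisms, a homotopy $H: [0,1]^2 \to S$ is handled by the analogous construction one dimension up: pulling back yields $H^*X \to [0,1]^2$ and the total groupoid $\fhl$, which receives Morita equivalences from both $\fptl$ and $\fptsl$ by the same contractibility argument, producing the required 2-cell between the spans of $\gamma$ and $\gamma'$. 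Invariance under homotopies of homotopies and the remaining 2-functor coherence axioms then reduce to formal manipulations of bibundles, powered by the contractibility of $[0,1]^n$.

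For part (b), applying the 2-functor Rep to the span produces the composite
\[
\text{Rep}(\fsl) \xleftarrow{\sim} \text{Rep}(\fptl) \xrightarrow{} \text{Rep}(\fel),
\]
where the left arrow is an equivalence because $(i_0)_*$ is a Morita equivalence; inverting it and then pushing along $(i_1)_*$ is exactly the classical recipe of extending a monodromy representation from the starting fibre to a representation on the whole pulled-back family and then restricting to the ending fibre. Under the twisted Riemann--Hilbert correspondence identifying representations of $\Pi_1(X_s, D_s)$ with logarithmic flat connections on $(X_s, D_s)$, this composite is precisely the classical isomonodromic deformation functor. When $D = \emptyset$ the twisted groupoid reduces to the ordinary fundamental groupoid and the nonsingular case is recovered verbatim.

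The hardest step will be the first one, namely verifying that fibre inclusions induce Morita equivalences of twisted fundamental groupoids as \emph{Lie} groupoids rather than merely as topological ones. The topological content follows from contractibility of the interval, but the Lie content demands a concrete smooth bibundle compatible with the logarithmic structure at points of $D$. Once a suitable Lie-groupoid model of $\Pi_1(X,D)$ is fixed (presumably constructed earlier in the paper, and exploiting how $D$ sits inside $X$ along $p$), the rest of the argument is largely formal, being driven by the behaviour of bibundle composition and the contractibility of cubes in dimensions up to three.
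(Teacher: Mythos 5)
Your overall strategy---a span of twisted fundamental groupoids induced by the two endpoint fibre inclusions, with contractibility of the base making both legs Morita equivalences, followed by Rep to extract isomonodromy---is conceptually the same as the paper's, which realizes your span concretely as the composite bibundle $s^{-1}(X_{\gamma(0)})\circ t^{-1}(X_{\gamma(1)})$ inside $\Pi_1(\Gamma^*X,\log\Gamma^*D)$ and proves the fibre inclusion induces an isomorphism onto the restricted groupoid by integrating the identity map of logarithmic tangent algebroids and checking injectivity on the dense open complement of the divisor. However, there is one concrete gap in your set-up: you pull the family back over the real interval $[0,1]$ (and over $[0,1]^2$ for homotopies). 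The resulting total space is not a complex manifold, so the twisted fundamental groupoid of the pullback---defined as the source-simply-connected integration of the \emph{holomorphic} Lie algebroid $T(-\log D)$---does not exist as a holomorphic Lie groupoid, and your span is not a diagram in $\text{LieGrpd}[\text{Mor}^{-1}]$ at all. This is not a technicality to be patched by ``local-triviality data'': invoking local trivializations would reintroduce exactly the kind of non-canonical choices the construction is designed to eliminate. The paper's fix is to thicken the graph $Gr(\gamma)\subset S\times I\subset S\times\mathbb{C}$ to a contractible open subset $\mathbb{D}\subset S\times\mathbb{C}$ and pull back along the restriction of the projection $\mathbb{D}\to S$; this keeps every object in the holomorphic category while preserving the contractibility that drives all the homotopy-theoretic arguments, and one checks the result is independent of the choice of $\mathbb{D}$.

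Two smaller divergences are worth noting. For functoriality under concatenation you propose a Seifert--van Kampen decomposition of the twisted fundamental groupoid along the middle fibre; no such theorem is available off the shelf for these integrations of logarithmic tangent algebroids, and the paper avoids needing one by characterizing elements of $Q_\gamma$ as groupoid arrows projecting to the graph of $\gamma$ and comparing directly with the bibundle composition $Q_\gamma\times_{X_{\gamma(1)}}Q_{\tilde\gamma}/\Pi_1(X_{\gamma(1)},D_{\gamma(1)})$. For the 2-cells, your description matches the paper's in outline, but the actual construction of $\eta_H$ rests on proving that the inclusions $\Pi_1(\Gamma^*X,\log\Gamma^*D)\to\Pi_1(H^*X,\log H^*D)$ are fully faithful (again via the algebroid isomorphism, a covering-map argument, and injectivity on the complement of the divisor), which is the substantive content you would still need to supply.
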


\bigskip

\begin{table}[h] 
\centering

\resizebox{\textwidth}{!}{%
    \begin{tabular}{ccccc}

    \underline{\parbox{3cm}{\centering Fundamental \\ 2-groupoid of $S$}} & &
    \underline{\parbox{5cm}{\centering 2-category of Lie groupoids \\ with Morita equivalences}} & &
    \underline{\parbox{3.5cm}{\centering 2-category of \\ Categories}} \\

    \\ 

    \text{point } $s \in S$ &
    $\rightsquigarrow$ &
    \begin{tabular}{c}
      \small twisted fundamental groupoid of the fibre \\
      $\Pi_1(X_s, D_s)$
    \end{tabular} &
    $\rightsquigarrow$ &
    \begin{tabular}{c}
      \small representation category \\
      $\text{Rep}(\Pi_1(X_s, D_s))$
    \end{tabular} \\

    \\ 

    \text{path } $ \gamma: s_0 \to s_1 $ &
    $\rightsquigarrow$ &
    \begin{tabular}{c}
        \small Morita equivalence \\
        $ \Pi_1(X_{s_0}, D_{s_0}) \xleftarrow{\mu_0} Q_\gamma \xrightarrow{\mu_1} \Pi_1(X_{s_1}, D_{s_1}) $
    \end{tabular} &
    $\rightsquigarrow$ &
    \begin{tabular}{c}
        \small functor \\
        $ \mathbb{P}_\gamma: \text{Rep}(\Pi_1(X_{s_0}, D_{s_0})) $ \\
        $ \to \text{Rep}(\Pi_1(X_{s_1}, D_{s_1})) $
    \end{tabular} \\

    \\
    
    \text{homotopy } $ \Downarrow H $ & $\rightsquigarrow$ &
    \begin{tabular}{c}
        \small morphism of Morita equivalences \\
        $ \Downarrow \Phi_H $
    \end{tabular} & $\rightsquigarrow$ &
    \begin{tabular}{c}
        \small natural isomorphism \\
        $ \Downarrow \eta_H $
    \end{tabular} \\
    
    \\

    \text{path } $ \gamma': s_0 \to s_1 $ &
    $\rightsquigarrow$ &
    \begin{tabular}{c}
        \small Morita equivalence \\
        $ \Pi_1(X_{s_0}, D_{s_0}) \xleftarrow{\mu_0} Q_{\gamma'} \xrightarrow{\mu_1} \Pi_1(X_{s_1}, D_{s_1}) $
    \end{tabular} &
    $\rightsquigarrow$ &
    \begin{tabular}{c}
        \small functor \\
        $ \mathbb{P}_{\gamma'}: \text{Rep}(\Pi_1(X_{s_0}, D_{s_0})) $ \\
        $ \to \text{Rep}(\Pi_1(X_{s_1}, D_{s_1})) $
    \end{tabular} \\

    \end{tabular}
} 
\bigskip
\caption{The 2-functor categorifying isomonodromic deformations.}
\end{table}

The notion of Morita equivalence will be explained below, but here we just make the observation that a Morita equivalence between two Lie groupoids is given by a geometric object (a complex manifold with commuting actions of both groupoids), and so the middle column gives a geometric incarnation of the isomonodromy functors. From this we can formally extract the isomonodromy functors by taking representation categories to pass to the third column. 
\subsection{Relation to other work}

One may think of the Morita equivalences constructed here as \textit{geometric} versions of some Fourier-Mukai transforms specially adapted to go between categories of flat connections with prescribed singularities (rather than between all D-modules), which specifically come from isomonodromy.
    
The category LieGrpd[Mor$^{-1}$] (which will be explained below) of holomorphic Lie groupoids and Morita equivalences can be taken as a presentation of the category of geometric holomorphic stacks (see \cite{Behrend and Xu, Stacky Lie groups} or \cite{del Hoyo}), where a Morita equivalence of Lie groupoids gives an isomorphism of the stacks presented by the Lie groupoids.

The construction thus provides, for a given family $(X,D) \to S$, a 2-local system on $S$ valued in holomorphic stacks. The study of higher local systems is an active research area, but to our knowledge these are usually valued in some kind of `linear' or stable categories, whereas here one is producing a 2-local system \textit{valued in holomorphic stacks} by upgrading the classical geometric phenomenon of isomonodromy.

\subsection{Review of contents}
In \Cref{sec: Morita equivalences}, we review the notions of Lie groupoid, Morita equivalence, and Lie algebroid. We also collect together some propositions relating Morita equivalences and representations of groupoids, as well as a lemma we will use to construct Morita equivalences.

\Cref{sec:nonsingular} details the categorification of isomonodromic deformations of nonsingular flat connections. We start by detailing the complex manifold underlying the Morita equivalence associated to a path in \Cref{subsec: complex manifold underlying Morita equivalence}. In \Cref{subsec:groupoid action on Q_gamma} we construct the groupoid actions on this complex manifold and prove that this produces a Morita equivalence. We follow this up in \Cref{subsec:morphism of Morita equivalences} by constructing a morphism of Morita equivalences from a homotopy of paths.

In \Cref{sec: logarithmic} we detail the categorification of isomonodromic deformations of flat connections with logarithmic singularities. We preface this in \Cref{sec: twisted fundamental groupoids} by giving a review of the notion of \textit{twisted} fundamental groupoid, which play the role played by the usual fundamental groupoid in \Cref{sec:nonsingular}. In \Cref{subsec:log Q_gamma} we construct the Morita equivalence associated to a path in the logarithmic case, and in \Cref{subsec: log morphism} we construct the morphism of Morita equivalences associated to a homotopy of paths in the logarithmic case, allowing us to conclude the proof of \Cref{thm: main theorem}.

\Cref{sec: Hirzebruch surface} specializes the construction to the simplest case of categorified isomonodromy along a constant path, noting that this gives an action of the second homotopy group of the base on the category of logarithmic flat connections on a fibre. We remark that if we puncture away the singular data along the divisor, this action of the second homotopy group recovers the connecting homomorphism of the fibration underlying our family. Thus in the case of logarithmic singularities this action can be understood as generalizing the connecting homomorphism to include singular data along a divisor. We finish this section by applying the categorified isomonodromy construction to the first Hirzebruch surface to get an action of $\pi_2(\mathbb{P}^1, s)$ on the category of flat connections on $\mathbb{P}^1$ with logarithmic singularities at $0$ and $\infty$.

We end with some remarks on future work in \Cref{sec: future work}.

\subsection{Acknowledgements}
I would like to thank my advisor Marco Gualtieri, both for suggesting this project and then for having countless meetings discussing it. I would also like to thank Daniel Alvarez, Aidan Lindberg, Daniel Litt, Kumar Murty, Maarten Mol and Grisha Taroyan for many helpful conversations. Special acknowledgement is due to Francis Bischoff for many discussions about his work, and to Thomas Stanley for numerous detailed conversations about this project since I started working on it.

\subsection{Conventions and notation}

All Lie groupoids used are holomorphic Lie groupoids, i.e. internal groupoids in the category of complex manifolds whose source and target maps are holomorphic submersions. 

Whenever we say $g \in \mathcal{G}$ without qualification for a groupoid $\mathcal{G}$, we mean that $g \in \mathcal{G}^{(1)}$, the space of arrows of $\mathcal{G}$.

Our convention for concatenation of homotopy classes of paths is that $[\alpha].[\beta]$ means that we first travel along the path $\alpha$ and then along the path $\beta$.

All fibrations may be taken to be Serre fibrations, since all spaces we work with are at least CW-complexes.

For a map $p:X \to S$ and $s \in S$, $X_s$ denotes the fibre $p^{-1}(s)$.

\section{Morita equivalences}\label{sec: Morita equivalences}

A groupoid $\mathcal{G}$ has a set of objects $\mathcal{G}^{(0)}$ and a set of arrows $\mathcal{G}^{(1)}$, with a target and a source map from the arrows to the objects, together denoted as:
\[\begin{tikzcd}
	{\mathcal{G}^{(1)}} \\
	{\mathcal{G}^{(0)}}
	\arrow["t"', shift right=2, from=1-1, to=2-1]
	\arrow["s", shift left=2, from=1-1, to=2-1]
\end{tikzcd}\]

If these sets of objects and arrows are in fact (complex) manifolds, and the target and source maps are (holomorphic) submersions, $\mathcal{G}$ is called a (holomorphic) \textit{Lie} groupoid (all Lie groupoids in this paper are holomorphic). The fundamental groupoid of a complex manifold $\Pi_1(X)$ is an example, where the holomorphic charts on the space of arrows are induced by the local homeomorphism $\Pi_1(X) \xrightarrow{(t,s)} X \times X$ given by the pair of target and source maps of the arrows.

The usual morphisms of Lie groupoids are defined to be functors between the underlying groupoids (considered as categories) that are actually holomorphic as a map between the complex manifolds of arrows, and holomorphic as a map between the complex manifold of objects. One can localize this category with respect to a certain class of morphisms Mor which as functors are equivalences of categories, thus weakening the notion of isomorphism between Lie groupoids to another equivalence relation known as Morita equivalence.

A more geometrically understandable way of presenting Morita equivalences is based on the use of principal groupoid bundles, which are an equivalent notion \cite{Stacky Lie groups, del Hoyo, Bischoff log connections}. We reproduce the definition of Morita equivalence from this viewpoint below, following Section 4 of \cite{Bischoff log connections}. We first need to generalize the definitions of \textit{left action}, \textit{left $\mathcal{G}$-bundle} and \textit{left principal $\mathcal{G}$-bundle} from groups to groupoids.

Let $\mathcal{G}$ be a Lie groupoid with space of objects $\mathcal{G}^{(0)}$ and source and target maps denoted $s$ and $t$ respectively. To let $\mathcal{G}$ act on a complex manifold $X$, we need the data of a map $\mu: X \to M$. Then a left action of $\mathcal{G}$ on $X$ is a map:

\[ L: \mathcal{G} \times_{s, \mu} X \rightarrow X,\] \[ (g,x) \mapsto g.x
\]

such that $\mu(g.x) = t(g)$ and $g.(g'.x) = (gg').x$.

We can now define a left $\mathcal{G}$-bundle as a surjective submersion $\pi: Q \to Y$ together with a left action of $\mathcal{G}$ on $Q$ which preserves the fibres of $\pi$.
Lastly, such a left $\mathcal{G}$-bundle is \textit{principal} if the map

\[ \mathcal{G} \times_{\mathcal{G}^{(0)}} Q \to Q \times_Y Q\] \[ (g,q) \mapsto (g.q, q)
\]

is an isomorphism, i.e. if the action is free and transitive on the fibres of $\pi$.

\begin{definition}\label{def: Morita equivalence}
    A Morita equivalence between Lie groupoids $\mathcal{G}$ and $\mathcal{H}$ is a bi-principal $\mathcal{(G,H)}$ bi-bundle. This consists of a complex manifold $Q$, together with actions of $\mathcal{G}$ and $\mathcal{H}$ with the following properties:

\[\begin{tikzcd}
	{\mathcal{G}^{(1)}} & Q & {\mathcal{H}^{(1)}} \\
	{\mathcal{G}^{(0)}} && {\mathcal{H}^{(0)}}
	\arrow[curve={height=-18pt}, from=1-1, to=1-2]
	\arrow[shift left, from=1-1, to=2-1]
	\arrow[shift right, from=1-1, to=2-1]
	\arrow["p", from=1-2, to=2-1]
	\arrow["q"', from=1-2, to=2-3]
	\arrow[curve={height=18pt}, from=1-3, to=1-2]
	\arrow[shift right, from=1-3, to=2-3]
	\arrow[shift left, from=1-3, to=2-3]
\end{tikzcd}\]

\begin{enumerate}
    \item $q: Q \to \mathcal{H}^{(0)}$ is a left principal $\mathcal{G}$-bundle with moment map $p$.
    \item $p: Q \to \mathcal{G}^{(0)}$ is a right principal $\mathcal{H}$-bundle with moment map $q$.
    \item The actions commute, i.e. $(g.z).h = g.(z.h)$, where
    $z \in Q, g \in \mathcal{G}, h \in \mathcal{H}$ and both actions are defined.
\end{enumerate}

\end{definition}

The presentation of Morita equivalences as \emph{biprincipal groupoid-bibundles} leads to a natural notion of morphism between them.

\begin{definition}\label{def:morphism of Morita equivalences}
Let $Q$ and $Q'$ both be Morita equivalences between the groupoids $\mathcal{G}$ and $\mathcal{H}$. A morphism between them is given by a map $\eta: Q \to Q'$ of the underlying (complex) manifolds which is biequivariant for the groupoid actions, i.e. for $g \in \mathcal{G}$ and $h \in \mathcal{H}$, $\eta(g.q) = g.(\eta(q))$ and $\eta(q.h) = (\eta(q)).h$. 
    
\end{definition}

\begin{remark}\label{rem: morphism is isomorphism}

Every such morphism of Morita equivalences is in fact automatically biholomorphic by the biequivariantness condition \cite{del Hoyo}, and thus gives an \textit{isomorphism} of Morita equivalences.
    
\end{remark}

We can now define the 2-category $\text{LieGrpd}[\text{Mor}^{-1}]$ mentioned above to have as objects Lie groupoids, as 1-morphisms Morita equivalences, and as 2-morphisms morphisms of Morita equivalences.  The composition of a $(\mathcal{G}, \mathcal{H})$-Morita equivalence $Q$ and an $(\mathcal{H}, \mathcal{K})$-Morita equivalence $Q'$ is the quotient space $(Q \times_{\mathcal{H}^{(0)}} Q') / \mathcal{H}$, formed by the fiber product over the intermediate base $\mathcal{H}^{(0)}$ modulo the diagonal $\mathcal{H}$-action, leaving the outer $\mathcal{G}$ and $\mathcal{K}$ actions intact. The composition of morphisms of Morita equivalences is just given by compositions of the underlying maps of complex manifolds. For more details about compositions of Morita equivalences and morphisms of Morita equivalences, see \cite{Stacky Lie groups, del Hoyo}.

The usefulness of the notion of Morita equivalence of groupoids is that it induces an equivalence between the corresponding categories of \textit{representations} of the groupoids. A representation of a Lie groupoid $\mathcal{G}$ can be defined as a vector bundle $V \to \mathcal{G}^{(0)}$ together with a left action of $\mathcal{G}$ on $V$ which is fibrewise linear. Analogously to the case of groups, we can also view a representation as a groupoid homomorphism $\mathcal{G} \to \mathcal{GL}(V)$, where $\mathcal{GL}(V)$ is the groupoid of all linear isomorphisms between the fibres of $V$.

We record the following proposition relating Morita equivalences and representations here:

\begin{proposition}\label{prop: Morita equivalence and representations} \cite{Bischoff log connections} (Proposition 4.2) A Morita equivalence $Q$ between groupoids $\mathcal{G}$ and $\mathcal{H}$ induces an equivalence between their categories of representations
Rep$(\mathcal{G}) \cong $ Rep$(\mathcal{H})$. 

\end{proposition}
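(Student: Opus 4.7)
The plan is to construct a functor $\Phi_Q: \text{Rep}(\mathcal{G}) \to \text{Rep}(\mathcal{H})$ out of the Morita equivalence $Q$, to produce a quasi-inverse from the ``opposite'' bi-bundle, and then to exhibit natural isomorphisms between the two composites and the identities. Throughout I would use the moment maps as in \Cref{def: Morita equivalence}: $p:Q\to\mathcal{G}^{(0)}$ and $q:Q\to\mathcal{H}^{(0)}$, with $q$ being the bundle projection for the principal left $\mathcal{G}$-action and $p$ for the principal right $\mathcal{H}$-action.

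Given a representation $V\to\mathcal{G}^{(0)}$ of $\mathcal{G}$, I would first form the pullback bundle $p^*V\to Q$. The left $\mathcal{G}$-action on $Q$ together with the linear action on the fibres of $V$ assembles into a fibrewise linear left $\mathcal{G}$-action on $p^*V$ covering the $\mathcal{G}$-action on $Q$. Because $q:Q\to\mathcal{H}^{(0)}$ is a principal left $\mathcal{G}$-bundle, the quotient
\[
\Phi_Q(V) := (p^*V)/\mathcal{G} \;\longrightarrow\; \mathcal{H}^{(0)}
\]
inherits the structure of a holomorphic vector bundle. The commuting right $\mathcal{H}$-action on $Q$ lifts trivially to $p^*V$, commutes with the $\mathcal{G}$-action, and therefore descends to a fibrewise linear right $\mathcal{H}$-action on $\Phi_Q(V)$, giving it the structure of an $\mathcal{H}$-representation. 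Functoriality on morphisms is routine: a $\mathcal{G}$-equivariant bundle map $V\to V'$ pulls back to a $\mathcal{G}\times\mathcal{H}$-equivariant map $p^*V\to p^*V'$ and descends to the quotients.

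For the quasi-inverse, I would run the symmetric construction with the opposite bi-bundle $Q^{\mathrm{op}}$, which swaps the roles of $\mathcal{G}$ and $\mathcal{H}$ (bi-principality ensures $Q^{\mathrm{op}}$ is again a Morita equivalence). To produce the natural isomorphism $\Phi_{Q^{\mathrm{op}}}\circ\Phi_Q\cong\mathrm{id}_{\text{Rep}(\mathcal{G})}$, I would use the fact that the composition of bi-bundles $Q\circ Q^{\mathrm{op}} = (Q\times_{\mathcal{H}^{(0)}} Q^{\mathrm{op}})/\mathcal{H}$ is canonically identified with the unit Morita equivalence $\mathcal{G}$ itself, via the map $[(q,q')]\mapsto g$, where $g\in\mathcal{G}$ is the unique arrow with $q = g\cdot q'$ (which exists by right $\mathcal{H}$-principality of $p$ applied to the pair $(q,q')$ lying over the same point of $\mathcal{H}^{(0)}$). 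Under this identification, the doubly iterated quotient defining $\Phi_{Q^{\mathrm{op}}}(\Phi_Q(V))$ collapses to $V$, and this identification is manifestly natural in $V$.

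The main obstacle is the analytic step of verifying that the quotient $(p^*V)/\mathcal{G}$ is a genuine holomorphic vector bundle over $\mathcal{H}^{(0)}$, rather than merely a set-theoretic quotient. This rests on the fact that the principal $\mathcal{G}$-bundle $q:Q\to\mathcal{H}^{(0)}$ admits local holomorphic sections $\sigma:U\to Q$; pulling $p^*V$ back along $\sigma$ produces a local holomorphic trivialization of the quotient over $U$, and changes of section are governed by holomorphic $\mathcal{G}$-valued transition functions acting on $V$ via the representation. Once this local triviality is in hand, the remaining work is bookkeeping: checking that the assignments $V\mapsto\Phi_Q(V)$ and morphism $\mapsto$ descended morphism are strictly functorial, and that the unit and counit natural isomorphisms coming from $Q\circ Q^{\mathrm{op}}\cong \mathcal{G}$ and $Q^{\mathrm{op}}\circ Q\cong \mathcal{H}$ are compatible with the respective representation actions.
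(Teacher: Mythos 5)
Your construction is the same pull--push procedure the paper sketches (and outsources to Bischoff, Proposition 4.2): pull $V$ back along the moment map $p$, quotient by the $\mathcal{G}$-action using principality of $q:Q\to\mathcal{H}^{(0)}$ to get a bundle on $\mathcal{H}^{(0)}$, and obtain the quasi-inverse from the opposite bibundle together with $Q\circ Q^{\mathrm{op}}\cong\mathcal{G}$. One small correction: the unique $g$ with $z=g\cdot z'$ for $z,z'$ in the same fibre over $\mathcal{H}^{(0)}$ is supplied by the \emph{left $\mathcal{G}$-principality of $q$} (condition (1) of \Cref{def: Morita equivalence}), not by the right $\mathcal{H}$-principality of $p$ as you wrote.
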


The functor giving the equivalence of categories in the above proposition can be defined on objects by a pull-push procedure that transfers vector bundles on $\mathcal{G}^{(0)}$ with $\mathcal{G}$-action to vector bundles on $\mathcal{H}^{(0)}$ with $\mathcal{H}$-action, pulling back $V$ by the moment map $p: Q \to \mathcal{G}^{(0)}$ (whose fibres have a free and fibrewise-transitive $\mathcal{H}$-action), and then quotienting out by the $\mathcal{G}$-action. For more details, see \cite{Bischoff log connections} or \cite{moerdijk mrcun book}.

We also sketch a proof of the following well-known proposition relating morphisms of Morita equivalences to natural isomorphisms of functors between representation categories (see for instance \cite{del Hoyo}):

\begin{proposition}\label{prop: morphism of Morita equivalences induces natural isomorphism}
A morphism of Morita equivalences $\eta: Q \to Q'$, with $Q$ and $Q'$ both Morita equivalences between $\mathcal{G}$ and $\mathcal{H}$, induces a natural isomorphism between the functors between representation categories induced by $Q$ and $Q'$.
    
\end{proposition}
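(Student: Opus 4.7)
The plan is to construct the components of the natural transformation explicitly from $\eta$ and then verify naturality. Write $(Q, p, q)$ and $(Q', p', q')$ for the two Morita equivalences between $\mathcal{G}$ and $\mathcal{H}$, and recall the description of the functor $F_Q: \text{Rep}(\mathcal{G}) \to \text{Rep}(\mathcal{H})$ associated to $Q$: it sends $V \to \mathcal{G}^{(0)}$ to the vector bundle $(p^*V)/\mathcal{G} \to \mathcal{H}^{(0)}$, with $\mathcal{H}$-action descended from the trivial lift to $p^*V$ of the $\mathcal{H}$-action on $Q$, and similarly for $F_{Q'}$ using $p'$.

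The key first step is to observe that biequivariance forces $\eta$ to intertwine moment maps, i.e.\ $p' \circ \eta = p$ and $q' \circ \eta = q$. Indeed, for $g.\eta(z) = \eta(g.z)$ to make sense whenever the right-hand side does, one needs $s(g) = p'(\eta(z))$ whenever $s(g) = p(z)$; taking $g = \mathrm{id}_{p(z)}$ gives $p(z) = p'(\eta(z))$, and the argument for $q$ is symmetric.

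For each $V \in \text{Rep}(\mathcal{G})$, this equality of moment maps yields a canonical bundle map $\widetilde{\eta}_V: p^*V \to p'^*V$ covering $\eta$ and acting as the identity on each fibre of $V$; this is a fibrewise linear isomorphism of holomorphic vector bundles, using the biholomorphicity of $\eta$ from \Cref{rem: morphism is isomorphism}. The biequivariance of $\eta$, combined with the explicit form of the two groupoid actions on the pullbacks (which act via the action on $V$ on the $\mathcal{G}$-side and trivially in the fibre on the $\mathcal{H}$-side), shows that $\widetilde{\eta}_V$ is itself $(\mathcal{G}, \mathcal{H})$-biequivariant, and therefore descends to an $\mathcal{H}$-equivariant isomorphism $\overline{\eta}_V: F_Q(V) \to F_{Q'}(V)$, which is the candidate component at $V$.

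Finally I would verify naturality in $V$: for any morphism $f: V \to W$ of $\mathcal{G}$-representations, the relevant square already commutes at the level of $\widetilde{\eta}$ before passing to the quotient, because the formula $(z, v) \mapsto (\eta(z), v)$ does not interact with $v$. The proof is essentially bookkeeping of the two commuting groupoid actions on $p^*V$ and $p'^*V$, so there is no serious obstacle; the entire content is captured in the moment-map identities $p' \circ \eta = p$ and $q' \circ \eta = q$ forced by biequivariance, with the rest following tautologically from the functoriality of pullback.
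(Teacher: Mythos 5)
Your proposal is correct and follows essentially the same route as the paper's proof: lift $\eta$ to a bundle map $p^*V \to p'^*V$ acting as the identity on the fibres of $V$, use $\mathcal{G}$-equivariance to descend to the quotients, and use $\mathcal{H}$-equivariance to see the descended maps are morphisms of $\mathcal{H}$-representations. Your extra observations (that biequivariance forces the moment-map identities, and the explicit naturality check) are details the paper leaves implicit, but they do not change the argument.
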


\begin{proof}
Note that $\eta$ is actually an \textit{isomorphism}, by \Cref{rem: morphism is isomorphism}. Letting $V$ denote an object of $\text{Rep}(\mathcal{G)}$, we have that since $\eta$ respects the moment maps ($p_{Q'} \circ \eta = p_Q$), it lifts canonically to a bundle isomorphism $\tilde{\eta}_V: p_Q^*V \to p_{Q'}^*V$ covering $\eta$ (acting as the identity on the fibers of $V$). Because $\eta$ is $\mathcal{G}$-equivariant, $\tilde{\eta}$ preserves the diagonal $\mathcal{G}$-action and descends to a well-defined isomorphism $\hat{\eta}_V: (p_Q^*V)/\mathcal{G} \xrightarrow{\sim} (p_{Q'}^*V)/\mathcal{G}$. The $\mathcal{H}$-equivariance of $\eta$ ensures that $\hat{\eta}_V$ intertwines the residual $\mathcal{H}$-actions, which gives that the family $\hat{\eta}$ defines a natural isomorphism.

\end{proof}

We will also need the infinitesimal counterpart to the notion of a Lie groupoid: a Lie algebroid. 

\begin{definition}\label{def: Lie algebroid}

A Lie algebroid over a (complex) manifold $M$ is a vector bundle $E \to M$ equipped with a Lie bracket on its space of sections, together with a map of vector bundles (called the \emph{anchor} map) $\rho: E \to TM$ which preserves the bracket, i.e.  for $X_1, X_2 \in \Gamma(M)$ we have:
\[\rho([X_1, X_2]_{\Gamma(E)}) = [\rho(X_1), \rho(X_2)]_{\Gamma(TM)}\] 

such that the Leibniz rule holds, i.e. for $f \in \Gamma(\mathcal{O}_M)$ we have:
\[ [X_1, f.X_2] = f.[X_1,X_2] + \rho(X_1)(f). X_2
\]
    
\end{definition}

Every Lie groupoid can be ``differentiated'' to a Lie algebroid by looking at infinitesimal information near the identity bisection of the Lie groupoid (the submanifold of the space of arrows $\mathcal{G}^{(1)}$ given by the identity arrows, which is isomorphic to $\mathcal{G}^{(0)}$ itself). The Lie algebroid $\text{Lie}(\mathcal{G}) = A \rightrightarrows M$ associated to a Lie groupoid $\mathcal{G}$ with space of objects $\mathcal{G}^{(0)} = M$ has underlying vector bundle given by $A = T\mathcal{G}^{(1)}|_M / TM$, the normal bundle of $M$ in $\mathcal{G}^{(1)}$. Note that the target map $t$ and source map $s$ of $\mathcal{G}$ coincide on the identity bisection $M$, which means that $Ts - Tt$ descends from $T\mathcal{G}$ to a map $A \to TM$. Just as for Lie groups and Lie algebras, one can also ``differentiate'' morphisms of Lie groupoids, and then ``$\text{Lie}$'' extends to be a functor from Lie groupoids to Lie algebroids $\text{Lie}: \text{LieGrpd} \to \text{LieAlgbd}$. For more details and an introduction to Lie groupoids and Lie algebroids, see \cite{moerdijk mrcun book} or \cite{mackenzie book}.

For a given Lie algebroid $A$, if there exists a Lie groupoid $\mathcal{G}$ such that $\text{Lie}(\mathcal{G}) = A$, we say that $A$ is \textit{integrable} and that $\mathcal{G}$ integrates $A$. What we will need about Lie algebroids is that integrable Lie algebroids $A$ (all Lie algebroids we will use in this paper are integrable) have a unique source-simply connected integration $\mathcal{G}$ (i.e. with simply connected fibres for the source map $s$) such that $\text{Lie}(\mathcal{G}) = A$, and that Lie's second theorem holds:

\begin{proposition}\label{prop: Lie II} \cite{moerdijk mrcun book} (Proposition 6.3)
    Let $\phi: \text{Lie}(\mathcal{H}) \to \text{Lie}(\mathcal{G})$ be a morphism of Lie algebroids, with $\mathcal{H}$ source-simply connected. Then there exists a unique morphism of Lie groupoids $\Phi: \mathcal{H} \to \mathcal{G}$ such that $\text{Lie}(\Phi) = \phi$.
\end{proposition}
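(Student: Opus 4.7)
The plan is to use the classical ``graph trick'' for Lie's second theorem. Form the product Lie groupoid $\mathcal{H} \times \mathcal{G}$, whose Lie algebroid is $\text{Lie}(\mathcal{H}) \times \text{Lie}(\mathcal{G})$. Consider the graph $A_\phi := \{(v, \phi(v)) : v \in \text{Lie}(\mathcal{H})\}$ as a subbundle sitting over the graph of the base map of $\phi$. I would first check that $A_\phi$ is a Lie subalgebroid of $\text{Lie}(\mathcal{H}) \times \text{Lie}(\mathcal{G})$: closure under the bracket is exactly the bracket-preservation part of $\phi$ being a morphism of algebroids, and compatibility of anchors ensures the anchor of $A_\phi$ lands in the tangent bundle of the graph of the base map. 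The first projection then exhibits $A_\phi \cong \text{Lie}(\mathcal{H})$ as Lie algebroids, so $A_\phi$ is integrable and its source-simply connected integration $\tilde{\Gamma}$ is canonically identified with $\mathcal{H}$.

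Next I would promote this abstract integration to an immersed Lie subgroupoid $\iota: \tilde{\Gamma} \hookrightarrow \mathcal{H} \times \mathcal{G}$. Geometrically, $\iota$ is obtained by flowing from the identity bisection along the left-invariant vector fields on $\mathcal{H} \times \mathcal{G}$ associated to sections of $A_\phi$; involutivity of this family ensures that the resulting orbits assemble into the source-fibers of a subgroupoid. Composing $\iota$ with the two projections yields groupoid morphisms $\text{pr}_1 \circ \iota: \mathcal{H} \to \mathcal{H}$ and $\text{pr}_2 \circ \iota: \mathcal{H} \to \mathcal{G}$. The former differentiates to the identity on $\text{Lie}(\mathcal{H})$, so a direct path-lifting argument (a groupoid morphism is determined on each source-fiber by its value at the identity element together with its derivative, and source-fibers of $\mathcal{H}$ are connected) forces it to be the identity on $\mathcal{H}$. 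Consequently $\iota(\tilde{\Gamma})$ is literally the graph of $\Phi := \text{pr}_2 \circ \iota: \mathcal{H} \to \mathcal{G}$, and this $\Phi$ is a groupoid morphism integrating $\phi$.

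For uniqueness, if $\Phi': \mathcal{H} \to \mathcal{G}$ is another groupoid morphism with $\text{Lie}(\Phi') = \phi$, then the graph of $\Phi'$ is an immersed Lie subgroupoid of $\mathcal{H} \times \mathcal{G}$ with Lie algebroid $A_\phi$, and its first projection is an isomorphism onto the source-simply connected $\mathcal{H}$. Hence this graph is a source-simply connected integration of $A_\phi$, so it must coincide with $\tilde{\Gamma}$, giving $\Phi' = \Phi$.

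The hard part is the second step: realizing the abstract source-simply connected integration of $A_\phi$ as an actual immersed subgroupoid of $\mathcal{H} \times \mathcal{G}$. This is not automatic from mere integrability of $A_\phi$; one must verify that the (possibly singular) distribution spanned by the left-invariant extensions of sections of $A_\phi$ is integrable in the Stefan--Sussmann sense, and that the leaf through the identity bisection carries a groupoid structure with simply connected source-fibers. Source-simple-connectedness of $\mathcal{H}$ is essential here: without it, the construction only produces a local morphism near the identity bisection, and path-lifting along source-fibers does not extend it globally in a well-defined way.
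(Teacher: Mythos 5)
The paper does not prove this proposition; it is quoted verbatim from Moerdijk--Mrcun as a known black box, so there is no in-paper argument to compare against. Your graph-trick proof is the standard one and is essentially the argument in that cited source: form the graph subalgebroid $A_\phi \subset \text{Lie}(\mathcal{H})\times\text{Lie}(\mathcal{G})$, integrate it to an immersed subgroupoid of $\mathcal{H}\times\mathcal{G}$, and use source-simple connectedness of $\mathcal{H}$ to see that the first projection is an isomorphism, so the subgroupoid is the graph of the desired $\Phi$. You correctly flag the one genuinely nontrivial ingredient --- that a (locally closed) Lie subalgebroid of an integrable algebroid integrates to an immersed Lie subgroupoid via the foliation by left-invariant vector fields --- which is itself a separate theorem in Moerdijk--Mrcun rather than something to be reproved inline, so treating it as a sketched input is appropriate. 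The only cosmetic quibble is in the uniqueness step: rather than invoking uniqueness of source-simply connected integrations of $A_\phi$, it is cleaner to note that two morphisms out of a source-\emph{connected} groupoid with the same derivative agree, since each source fiber is generated by flows of right-invariant vector fields; this is also the fact you implicitly use to show $\mathrm{pr}_1\circ\iota=\mathrm{id}$, and it is independent of the existence half, so there is no circularity.
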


In this case, we say that $\Phi$ \textit{integrates} $\phi$. \\

To construct Morita equivalences we will need the following well-known criterion for a Morita equivalent subgroupoid, phrased in terms of Lie algebroids:

\begin{lemma}\label{lemma: criterion for Morita subgroupoid} \cite{moerdijk mrcun book}, \cite{Bischoff log connections} (Proposition 4.5)
Let $\mathcal{G}$ be a Lie groupoid with space of objects denoted $M$, and target and source maps denoted $t$ and $s$ respectively, and let $N \subset M$ be an embedded submanifold. Let $A \rightrightarrows M$ denote the Lie algebroid of $\mathcal{G}$ with anchor map $\rho$. If $N$ intersects every orbit of $\mathcal{G}$ and is transverse to $\rho$, then $\mathcal{G}|_N = t^{-1}(N) \cap s^{-1}(N)$ is a Lie subgroupoid of $\mathcal{G}$ which is Morita equivalent to $\mathcal{G}$ via the following Morita equivalence:

\[\begin{tikzcd}
	{\mathcal{G}|_N} & {t^{-1}(N)} & {\mathcal{G}^{(1)}} \\
	N && {\mathcal{G}^{(0)}}
	\arrow[curve={height=-18pt}, from=1-1, to=1-2]
	\arrow[shift left, from=1-1, to=2-1]
	\arrow[shift right, from=1-1, to=2-1]
	\arrow["t", from=1-2, to=2-1]
	\arrow["s"', from=1-2, to=2-3]
	\arrow[curve={height=18pt}, from=1-3, to=1-2]
	\arrow[shift right, from=1-3, to=2-3]
	\arrow[shift left, from=1-3, to=2-3]
\end{tikzcd}\]

The left action here is defined as $g. \alpha \overset{\text{def}}{=} \iota(g).\alpha$, where $\iota: \mathcal{G}|_N \hookrightarrow \mathcal{G}$ is the inclusion, and the right-hand side of the equality is groupoid multiplication in $\mathcal{G}$.
\end{lemma}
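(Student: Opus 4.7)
The plan is to first establish that $\mathcal{G}|_N$ is a Lie subgroupoid and that $t^{-1}(N)$ is a smooth manifold, then verify in order the three properties required by \Cref{def: Morita equivalence} for the displayed bibundle.

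Since $t$ is a submersion, $t^{-1}(N)$ is an embedded submanifold of $\mathcal{G}^{(1)}$ of codimension $\dim M - \dim N$. To see that $\mathcal{G}|_N = t^{-1}(N) \cap s^{-1}(N)$ is itself smooth, I would show that the map $(t,s): \mathcal{G}^{(1)} \to M \times M$ is transverse to $N \times N$. At an identity arrow $1_x$ with $x \in N$, decompose $T_{1_x}\mathcal{G}^{(1)} \cong T_xM \oplus A_x$ via the unit bisection, taking the convention $A = \ker Tt|_M$ with anchor $\rho = Ts|_A$; then the differential of $(t,s)$ sends $(v,a)$ to $(v,\, v+\rho(a))$. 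This image together with $T_xN \times T_xN$ spans $T_xM \times T_xM$ precisely when $T_xN + \rho(A_x) = T_xM$, which is the transversality hypothesis $N \pitchfork \rho$. For a general arrow $g: x \to y$ with $x,y \in N$, left translation by $g$ inside $\mathcal{G}$ is a diffeomorphism carrying $1_y$ to $g$ while preserving the relevant subspaces, reducing the transversality check at $g$ to the identity case. Hence $\mathcal{G}|_N$ is an embedded Lie subgroupoid on which the structure maps of $\mathcal{G}$ restrict cleanly.

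For the bibundle structure, the moment map $s: t^{-1}(N) \to M$ is surjective because $N$ meets every orbit: given $x \in M$, any arrow $g: x \to y$ with $y \in N$ lies in $t^{-1}(N)$ and satisfies $s(g)=x$. It is a submersion as the restriction of the submersion $s: \mathcal{G}^{(1)} \to M$ to the transversely-cut submanifold $t^{-1}(N)$. The right $\mathcal{G}$-action by groupoid multiplication preserves $s$, is free, and is fibrewise transitive since two elements $g,g' \in t^{-1}(N)$ with common source differ by the unique arrow $g^{-1}g' \in \mathcal{G}$. The left $\mathcal{G}|_N$-action on $t: t^{-1}(N) \to N$ is handled analogously: $t$ is surjective via identity arrows at points of $N$, and two arrows $g,g' \in t^{-1}(N)$ with common target $y \in N$ differ by $g'g^{-1}: y \to y$, which lies in $\mathcal{G}|_N$ precisely because both endpoints are in $N$. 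Commutativity of the two actions is just associativity of multiplication in $\mathcal{G}$.

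The main obstacle is the transversality analysis in the second paragraph — correctly identifying the algebroid fibre $A_x$ inside $T_{1_x}\mathcal{G}^{(1)}$ and verifying that the hypothesis $N \pitchfork \rho$ is exactly what makes $(t,s)$ transverse to $N \times N$. Everything else is routine groupoid bookkeeping combined with the general fact that submersions pull submanifolds back to submanifolds of matching codimension.
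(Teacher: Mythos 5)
The paper does not actually prove this lemma---it is imported from Moerdijk--Mrcun and from Proposition 4.5 of Bischoff's paper---so I am judging your argument on its own terms. Your overall architecture is the standard one, and most of the groupoid bookkeeping is fine: surjectivity of $s\colon t^{-1}(N)\to M$ from the orbit condition, freeness and fibrewise transitivity of both actions (with the correct observation that $g'g^{-1}$ lands in $\mathcal{G}|_N$ because both endpoints lie in $N$), and commutativity from associativity. The linear algebra at an identity arrow, identifying $A_x$ with $\ker T_{1_x}t$ and showing that transversality of $(t,s)$ to $N\times N$ there is exactly $T_xN+\rho(A_x)=T_xM$, is also correct.

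There is, however, a genuine gap, and it sits precisely where the hypothesis $N\pitchfork\rho$ has to do its work a second time. You claim that $s\colon t^{-1}(N)\to M$ is a submersion ``as the restriction of the submersion $s$ to the transversely-cut submanifold $t^{-1}(N)$.'' That is not a valid inference: restricting a submersion to a submanifold, even one obtained as the preimage of a submanifold under another submersion, does not in general yield a submersion. Your justification makes no reference to the hypothesis $N\pitchfork\rho$, yet the conclusion fails without it: take $\mathcal{G}$ to be the foliation groupoid of the horizontal foliation of $\mathbb{R}^2$, so $\mathcal{G}^{(1)}=\{((x_1,y),(x_2,y))\}$ with $\rho(A)$ spanned by $\partial_x$, and let $N$ be the parabola $\{(x,x^2)\}$, tangent to $\rho(A)$ at the origin. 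Then $t^{-1}(N)\cong\{(x_1,x_2)\}$ is a perfectly good submanifold, but $s|_{t^{-1}(N)}=(x_2,x_1^2)$ has degenerate differential at $x_1=0$. So submersivity of this moment map is equivalent to the transversality hypothesis and needs a genuine argument at arrows $g\colon x\to y$ with $y\in N$ but $x\notin N$, where your identity-arrow decomposition does not apply. The standard fix: given $w\in T_xM$, lift it to $v_0\in T_g\mathcal{G}^{(1)}$ with $T_gs(v_0)=w$, write $T_gt(v_0)=n+\rho(a)$ with $n\in T_yN$ using transversality at $y$, and then correct $v_0$ by an element of $\ker T_gs$ using the fact that $T_gt(\ker T_gs)=\rho(A_y)$ (because $t$ restricted to an $s$-fibre submerges onto the orbit); the corrected vector is tangent to $t^{-1}(N)$ and still maps to $w$. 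The same observation, namely that $\operatorname{im}T_g(t,s)$ contains $\rho(A_{t(g)})\times\rho(A_{s(g)})$, also repairs your reduction of the transversality of $(t,s)$ at a general arrow of $\mathcal{G}|_N$: as written you invoke ``left translation by $g$,'' but that is only a diffeomorphism between $t$-fibres, not of a neighbourhood in $\mathcal{G}^{(1)}$, so it cannot by itself transport transversality of the pair $(t,s)$ (one would need a local bisection, and then track how $N\times N$ transforms).
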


\begin{remark}\label{rem:switching s and t for Morita equivalence}

We could instead have taken $s^{-1}(N)$ to get a Morita equivalence with $\mathcal{G}$ acting on the left and $\mathcal{G}|_N$ acting on the right.
    
\end{remark}

\section{Categorification of nonsingular isomonodromy}\label{sec:nonsingular}

In this section, we construct the Morita equivalence corresponding to a chosen path $\gamma$ in the nonsingular case, i.e. for a family of complex manifolds $X \to S$, where the divisor $D \subset X$ is empty. The idea for building the Morita equivalence \textit{without} choosing a lift of the path $\gamma$ is to construct the space of all possible choices of lifts and realize that this itself carries a complex structure, as well as carrying actions of both the fundamental groupoids of the fibres we are comparing. We follow this up in \Cref{subsec:morphism of Morita equivalences} by constructing the morphism of Morita equivalences corresponding to a chosen homotopy of paths $H$.

\subsection{Construction of the complex manifold underlying the Morita equivalence}\label{subsec: complex manifold underlying Morita equivalence}
Throughout this section, we fix $p: X \to S$ to be a holomorphic submersion which is also a fibration, and $\gamma: I \to S$ to be a continuous path.

\begin{remark}
    As is often the case in geometric problems, we will frequently be interested in the case where $p:X \to S$ is actually a topological \textit{fibre bundle} rather than an abstract fibration, but all of the proofs are carried out in the generality of fibrations. We note that the hypotheses include the important special case of $p:X \to S$ being a proper submersion, but we also want to allow the fibres to be non-compact.
\end{remark}

We would like to `thicken' $\gamma$ to a small complex-analytic neighbourhood of itself, so that we can do all of our constructions inside the category of complex manifolds - this will help to immediately realize the complex structure on the `space of choices of lifts' that we mention above. All of this motivates the following construction:

\textbf{Construction.} The graph $Gr(\gamma) = \{(\gamma(t), t) | t \in I \} \subset S \times I$ is contractible since it is homeomorphic to the interval $I$ ($I$ maps bijectively onto $Gr(\gamma)$ by $t \mapsto (\gamma(t), t)$, $I$ is compact, and all spaces involved are Hausdorff). $Gr(\gamma) \subset S \times I \subset S \times \mathbb{C}$, and since $Gr(\gamma)$ is contractible, it is contained in a small enough open subset $\mathbb{D} \subset S \times \mathbb{C}$ such that $\mathbb{D}$ is still contractible. $\mathbb{D}$ is thus a contractible complex manifold, and the projection map $S \times \mathbb{C} \to S$ restricts to give a map $\mathbb{D} \to S$ that we will denote by $\Gamma$ (since it has been produced from $\gamma$).

We can now form the pullback family along $\Gamma$:

\[\begin{tikzcd}
	{\Gamma^*X \overset{\mathrm{def}}{=} X \times_S \mathbb{D}} & X \\
	\mathbb{D} & S
	\arrow["\pi_X", from=1-1, to=1-2]
	\arrow["\pi"', from=1-1, to=2-1]
	\arrow["p", from=1-2, to=2-2]
	\arrow["\Gamma", from=2-1, to=2-2]
\end{tikzcd}\]

That $p$ is a holomorphic submersion ensures that $\Gamma^*X$ is a complex manifold and the projection $\pi: \Gamma^*X \to \mathbb{D}$ is a holomorphic submersion. Thus the induced map:

\[\begin{tikzcd}
	{\Pi_1(\Gamma^*X)} \\
	{\Pi_1(\mathbb{D)}}
	\arrow["{\pi_*}", from=1-1, to=2-1]
\end{tikzcd}\]

is also a holomorphic submersion (since $\pi_*$ locally around an element $[\gamma]$ looks like $\pi \times \pi$ by using the coordinate charts on the space of arrows of the fundamental groupoid).

Recalling that $(\gamma(t), t) \in Gr(\gamma) \subset \mathbb{D}$, we can make the following definition:

\begin{definition}
    Denote by $e$ the path $t \mapsto (\gamma(t), t)$ in $\mathbb{D}$. We define $Q_{\gamma} \overset{\text{def}}{=} \pi_*^{-1}([e])$.

\[\begin{tikzcd}
	{Q_{\gamma}} & {\Pi_1(\Gamma^*X)} \\
	{[e]} & {\Pi_1(\mathbb{D)}}
	\arrow[from=1-1, to=1-2]
	\arrow[from=1-1, to=2-1]
	\arrow["{\pi_*}", from=1-2, to=2-2]
	\arrow[from=2-1, to=2-2]
\end{tikzcd}\]

\end{definition}

Note that the definition of $Q_\Gamma$ does not depend on the particular choice of open contractible $\mathbb{D}$ containing $Gr(\gamma)$, since $\Gamma$ will agree on the intersection of any two such, and $e$ lies in the intersection since it lies in $Gr(\gamma)$.

The way we have constructed $Q_\gamma$, it is the preimage of a holomorphic submersive map and so is naturally a complex manifold.

\subsection{The groupoid actions on $Q_{\gamma}$}\label{subsec:groupoid action on Q_gamma}

Since $Q_{\gamma}$ is constructed as a submanifold of $\Pi_1(\Gamma^*X)$, we can construct a natural map $\mu_0$ to $X_{\gamma(0)}$ and $\mu_1$ to $X_{\gamma(1)}$ by taking the start and end points of an element.

\begin{lemma}\label{lemma: endpoints and Q_gamma}
    (1) For $[\alpha] \in Q_\gamma$, $\mu_0 ([\alpha]) \overset{\text{def}}{=} \alpha(0)$ is an element of $X_{\gamma(0)}$, and $\mu_1([\alpha]) \overset{\text{def}}{=} \alpha(1)$ is an element of $X_{\gamma(1)}$.
    
    (2) Conversely for $[\alpha] \in \fpt$ with $\alpha(0) \in X_{\gamma(0)}$ and $\alpha(1) \in X_{\gamma(1)}$, we must have that $[\alpha] \in Q_{\gamma}$.
\end{lemma}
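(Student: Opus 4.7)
The plan is to unpack the definition of $Q_\gamma$ and exploit the fact that $\mathbb{D}$ was built to be contractible. Everything is book-keeping via the two observations that (i) homotopic paths have matching endpoints, and (ii) in a simply connected space the converse holds.

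For part (1), I would start from the hypothesis $[\alpha] \in Q_\gamma = \pi_*^{-1}([e])$, which by functoriality of $\pi_*$ means $[\pi \circ \alpha] = [e]$ in $\Pi_1(\mathbb{D})$. Since homotopy classes of paths are well-defined only rel endpoints, this forces $\pi(\alpha(0)) = e(0) = (\gamma(0), 0)$ and $\pi(\alpha(1)) = e(1) = (\gamma(1), 1)$. Writing $\alpha(i) \in \Gamma^*X = X \times_S \mathbb{D}$ as a pair, the first component is then $\pi_X(\alpha(i)) \in X$ with $p(\pi_X(\alpha(i))) = \Gamma(\gamma(i), i) = \gamma(i)$. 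Under the canonical identification of $X_{\gamma(i)}$ with the fibre of $\pi$ over $(\gamma(i), i)$ — i.e.\ via the inclusion $x \mapsto (x, (\gamma(i),i))$ — this says $\alpha(0) \in X_{\gamma(0)}$ and $\alpha(1) \in X_{\gamma(1)}$.

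For part (2), I would run the same argument in reverse, substituting simple connectedness for path-matching. Under the identification just fixed, the hypotheses on $\alpha(0)$ and $\alpha(1)$ say $\pi(\alpha(0)) = (\gamma(0),0) = e(0)$ and $\pi(\alpha(1)) = (\gamma(1),1) = e(1)$, so $\pi \circ \alpha$ and $e$ are two paths in $\mathbb{D}$ with the same pair of endpoints. Since $\mathbb{D}$ was chosen contractible, and in particular simply connected, any two such paths are homotopic rel endpoints; hence $[\pi \circ \alpha] = [e]$ in $\Pi_1(\mathbb{D})$, which by functoriality is exactly $\pi_*[\alpha] = [e]$, i.e.\ $[\alpha] \in Q_\gamma$.

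There is no substantive obstacle: the lemma is the translation between the abstract defining condition $\pi_*[\alpha] = [e]$ and a concrete endpoint condition on $\alpha$, and it works precisely because the thickening $\mathbb{D}$ was arranged to make $\pi_1(\mathbb{D})$ trivial. The one subtlety worth flagging in the write-up is fixing which copy of $X_{\gamma(i)}$ inside $\Gamma^*X$ is meant — namely the fibre of $\pi$ over the distinguished point $(\gamma(i), i)$ on the graph of $\gamma$ — since $\Gamma^{-1}(\gamma(i))$ generally contains other points and using any of those copies would break part (2).
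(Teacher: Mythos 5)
Your proof is correct and follows essentially the same route as the paper's: part (1) uses that homotopy classes rel endpoints determine endpoints, and part (2) uses simple connectedness of $\mathbb{D}$ to force $[\pi\circ\alpha]=[e]$. The remark about which copy of $X_{\gamma(i)}$ sits inside $\Gamma^*X$ is a worthwhile clarification, matching the paper's appeal to the definition of the fibre product.
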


\begin{proof}
    $\pi_*( [\alpha] ) = [e]$ by definition since $[\alpha] \in Q_\gamma$. Thus $\pi \circ \alpha$ is homotopic with fixed endpoints to $e$ in $\mathbb{D}$ and thus has the same endpoints, meaning that $(\pi (\alpha (0)) = (\gamma(0), 0)$ and $(\pi (\alpha) (1)) = (\gamma(1), 1)$. But by definition of the fibre product $\Gamma^*X$, this is exactly what it means for $\alpha(0)$ to lie in the fibre above $\gamma(0)$, i.e. $X_{\gamma(0)}$, and for $\alpha(1)$ to lie in the fibre above $\gamma(1)$, i.e. $X_{\gamma(1)}$.

    Conversely, suppose that for $[\alpha] \in \fpt$, $\alpha(0) \in X_{\gamma(0)}$ and $\alpha(1) \in X_{\gamma(1)}$. This means that $\pi(\alpha(0)) = (\gamma(0), 0) \in \mathbb{D}$ and $\pi(\alpha(1)) = (\gamma(1), 1)$. Thus $\pi_*([\alpha])$ is the homotopy class of a path with endpoints $(\gamma(0), 0)$ and $(\gamma(1), 1)$, and since $\mathbb{D}$ is contractible, there is a unique such homotopy class given by $[e]$ as defined above. Thus $[\alpha] \in \pi_*^{-1}([e]) = Q_\gamma$ as wanted.
\end{proof}

\begin{lemma}
    $\mu_0: Q_\gamma \to X_{\gamma(0)}$ and $\mu_1: Q_\gamma \to X_{\gamma(1)}$ are surjective submersions.
\end{lemma}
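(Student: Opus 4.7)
The plan is to treat surjectivity and the submersion condition separately, using fibration path-lifting for the first and the local coordinate structure on the fundamental groupoid for the second.

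For surjectivity of $\mu_0$, I would start from a point $x_0 \in X_{\gamma(0)}$, which canonically identifies with a point in the fiber $(\Gamma^*X)_{(\gamma(0),0)}$. Because $p:X\to S$ is a fibration and fibrations are preserved under pullback, the projection $\pi: \Gamma^*X \to \mathbb{D}$ is a Serre fibration. Applying the path-lifting property to the path $e:I \to \mathbb{D}$, $t \mapsto (\gamma(t),t)$, with starting point $x_0$, yields a lift $\alpha:I \to \Gamma^*X$ with $\pi\circ\alpha = e$ and $\alpha(0)=x_0$. In particular $\alpha(0)\in X_{\gamma(0)}$ and $\alpha(1)\in X_{\gamma(1)}$, so by \Cref{lemma: endpoints and Q_gamma}(2) we have $[\alpha]\in Q_\gamma$, and $\mu_0([\alpha])=x_0$. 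Surjectivity of $\mu_1$ is symmetric: lift the reversed path $\bar e(t)=e(1-t)$ from a given $x_1\in X_{\gamma(1)}$ and reverse the resulting lift in $\Gamma^*X$.

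For the submersion condition, I would exploit the holomorphic chart structure on $\fpt$ recalled in the introduction, where $(t,s):\fpt \to \Gamma^*X\times\Gamma^*X$ is a local biholomorphism onto its image. Around $[\alpha]\in Q_\gamma$ this gives a linear isomorphism
\[
d(t,s)_{[\alpha]}: T_{[\alpha]}\fpt \xrightarrow{\sim} T_{\alpha(1)}\Gamma^*X \oplus T_{\alpha(0)}\Gamma^*X,
\]
under which $d(\mu_1)_{[\alpha]}$ and $d(\mu_0)_{[\alpha]}$ are the two coordinate projections. Since the same local biholomorphism identifies $\pi_*$ locally with $\pi\times\pi$ (this is precisely how the submersion $\pi_*$ was recognized in the construction), $d(\pi_*)_{[\alpha]}$ corresponds to $(d\pi)_{\alpha(1)}\oplus (d\pi)_{\alpha(0)}$. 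Therefore
\[
T_{[\alpha]}Q_\gamma \;=\; \ker d(\pi_*)_{[\alpha]} \;\cong\; T_{\alpha(1)}X_{\gamma(1)} \oplus T_{\alpha(0)}X_{\gamma(0)},
\]
using that the $\pi$-fiber over $(\gamma(i),i)$ is identified with $X_{\gamma(i)}$. The two coordinate projections out of this direct sum are manifestly surjective, giving that $d\mu_0$ and $d\mu_1$ are surjective at every point, which is exactly the submersion condition for a holomorphic map.

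The only step I expect to require any care is the identification of $d\pi_*$ with $d\pi\oplus d\pi$ under the $(t,s)$-chart; this is morally transparent because $\pi_*$ acts by postcomposition with $\pi$ while the $(t,s)$-chart models $\fpt$ on pairs of nearby endpoints, but to be safe I would verify it by writing out a simply-connected open $U \subset \Gamma^*X$ containing both $\alpha(0)$ and $\alpha(1)$ (after shrinking/covering if necessary) so that the chart on $\fpt$ really is $U \times U$ and $\pi_*$ really is the square of $\pi$. Everything else is a routine application of the homotopy lifting property and linear algebra.
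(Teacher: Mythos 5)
Your proof is correct and takes essentially the same approach as the paper: surjectivity comes from exhibiting, for each point of the fibre, a class of $Q_\gamma$ with that endpoint (the paper cites part (2) of the preceding lemma and leaves the path-lifting you spell out implicit), and the submersion property comes from viewing $\mu_0,\mu_1$ as the restrictions of the source and target maps together with the local $(t,s)$-chart structure on the fundamental groupoid. Your tangent-space computation identifying $T_{[\alpha]}Q_\gamma$ with $T_{\alpha(1)}X_{\gamma(1)}\oplus T_{\alpha(0)}X_{\gamma(0)}$ is exactly a verification of the transversality of $Q_\gamma$ to the $s$- and $t$-fibres that the paper asserts in one line, so it just supplies detail rather than a different argument.
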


\begin{proof}
    Surjectivity follows from (2) of the above lemma. To see that these are submersions, note that $\mu_0$ and $\mu_1$ are the restrictions of the source map $s$ and the target map $t$ of the groupoid $\fpt$ to the submanifold $Q_\gamma$, and $Q_\gamma$ is transverse to the fibres of $s$ and $t$. 
\end{proof}

Now that we have moment maps $\mu_0$ and $\mu_1$ to the base of the groupoids $\fs$ and $\fe$ respectively, we can define the action of these groupoids on $Q_\gamma$. To do this, we first consider the inclusion of the fibres

 \begin{center}
     $X_{\gamma(0)} \xrightarrow{\iota_0} \Gamma^*X$ and $X_{\gamma(1)} \xrightarrow{\iota_1} \Gamma^*X$
 \end{center}
 
 and then take the induced maps on fundamental groupoids 
 \begin{center}
     $\fs \xrightarrow{\iota_0} \fpt$ and $\fe \xrightarrow{\iota_1} \fpt$
 \end{center}

 (where we are denoting the induced maps by the same notation $\iota_0$ and $\iota_1$ to avoid an overload of notation). Recalling that $Q_\gamma$ is a submanifold of $\fpt$, we can then define the actions in the following way:
 
\begin{definition}
    a) The \textnormal{left action} on $Q_\gamma$ by $\fs$ is given by
    $[g_0]*[\alpha] \overset{\text{def}}{=} \iota_0[g_0].[\alpha]$ for $[g_0] \in \fs$, $[\alpha] \in Q_\gamma$, i.e. we first take the image of $[g_0]$ in the fundamental groupoid of the \textit{total space} $\Gamma^*X$, and then use the groupoid multiplication of $\fpt$ to \textnormal{precompose} $[\alpha]$ by this image.
    
    b) The \textnormal{right action} on $Q_\gamma$ by $\fe$ is given by $[\alpha]*[g_1] \overset{\text{def}}{=} [\alpha]. \iota_1[g_1]$ for $[g_1] \in \fe$, $[\alpha] \in Q_\gamma$, i.e. we first take the image of $[g_1]$ in $\fpt$ and then use its groupoid multiplication to \textnormal{postcompose} $[\alpha]$ by this image.
\end{definition}

We note that these two actions obviously commute since concatenation of paths on the left commutes with concatenation of paths on the right, i.e. $([g_0]*[\alpha])*[g_1] = [g_0]*([\alpha]*[g_1])$ for every $[\alpha] \in \fpt$, $[g_0] \in \fs$ and $[g_1] \in \fe$. Thus they satisfy condition (3) in Definition 1 (of a Morita equivalence) above.

\begin{theorem}\label{Q_gamma is a Morita equivalence}
    Let $p:X \to S$ a holomorphic submersion which is also a fibration, and let $\gamma: I \to S$ be a path in $S$. Then $Q_\gamma$ defined in Definition 2 is a Morita equivalence between $\Pi_1(X_{\gamma(0)})$ and $\Pi_1(X_{\gamma(1)})$.
\end{theorem}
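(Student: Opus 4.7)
The plan is to verify directly the three axioms of \Cref{def: Morita equivalence} for the bibundle $Q_\gamma$ with moment maps $\mu_0, \mu_1$. Three pieces are already essentially in hand: the moment maps $\mu_0,\mu_1$ are surjective submersions by the preceding lemma; the left $\fs$-action and the right $\fe$-action commute since they amount to composing on opposite sides in $\fpt$ by (the images of) arrows of $\fs$ and $\fe$; and each action preserves the fibres of the opposite moment map, since precomposing $[\alpha]$ by an arrow based at $\alpha(0)$ cannot change $\alpha(1)$, and symmetrically on the other side. The remaining task is principality, namely that each action is free and transitive on the fibres of the opposite moment map. Since $\mu_0,\mu_1$ are holomorphic submersions and a routine dimension count shows the division map is a holomorphic map between complex manifolds of the same dimension, set-theoretic freeness and transitivity will promote automatically to the biholomorphism condition in \Cref{def: Morita equivalence}; so it suffices to check principality set-theoretically, and by symmetry it is enough to treat the left $\fs$-action.

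The essential geometric input is that $\pi:\Gamma^*X \to \mathbb{D}$ is a fibration (being the pullback of the fibration $p$) with contractible base, so the fibre inclusion $\iota_0: X_{\gamma(0)} \hookrightarrow \Gamma^*X$ is a weak homotopy equivalence; in particular $\iota_{0*}: \pi_1(X_{\gamma(0)}, x) \to \pi_1(\Gamma^*X, x)$ is an isomorphism at every basepoint $x$. Freeness of the $\fs$-action is then immediate: if $[g_0]*[\alpha] = [\alpha]$ then $\iota_0[g_0]$ is the identity loop at $\alpha(0)$ in $\fpt$, hence $[g_0]$ is the identity loop at $\alpha(0)$ in $\fs$.

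For fibrewise transitivity, take $[\alpha], [\beta] \in Q_\gamma$ with $\alpha(1) = \beta(1)$. Solving $\iota_0[g_0].[\alpha] = [\beta]$ reduces to producing $[g_0] \in \fs$ with $\iota_0[g_0] = [\beta].[\alpha]^{-1}$, an arrow of $\fpt$ running from $\beta(0) \in X_{\gamma(0)}$ to $\alpha(0) \in X_{\gamma(0)}$. By \Cref{lemma: endpoints and Q_gamma}, $\pi_*[\alpha] = \pi_*[\beta] = [e]$, so $\pi_*\bigl([\beta].[\alpha]^{-1}\bigr) = [e].[e]^{-1}$ is the identity in $\Pi_1(\mathbb{D})$; that is, a representing path $\beta.\alpha^{-1}$ in $\Gamma^*X$ projects to a loop in $\mathbb{D}$ that is null-homotopic rel endpoints. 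The homotopy lifting property of $\pi$ then lifts this null-homotopy to a rel-endpoint homotopy in $\Gamma^*X$ from $\beta.\alpha^{-1}$ to a path whose $\pi$-image is constant at $(\gamma(0),0)$, which therefore lies entirely in the fibre $X_{\gamma(0)}$; this is the desired $g_0$.

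The right $\fe$-action on the fibres of $\mu_0$ is handled by the symmetric argument, using $\iota_1$ in place of $\iota_0$ and noting that $[\alpha]^{-1}.[\beta]$ projects to the identity at $(\gamma(1),1)$. The main obstacle throughout is bookkeeping: being careful with the concatenation convention, and distinguishing paths from their homotopy classes when invoking homotopy lifting in its relative form (lifting a homotopy of the projected loop while fixing both endpoint lifts simultaneously). Once the projection of the relevant concatenation is identified with the trivial loop in $\Pi_1(\mathbb{D})$, everything else is forced by the fibration structure over a contractible base.
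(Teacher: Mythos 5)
Your proof is correct and takes essentially the same approach as the paper: the $\pi_1$-isomorphism coming from the fibre inclusion into $\Gamma^*X$ over the contractible base $\mathbb{D}$ gives freeness by cancellation, and lifting the null-homotopy of the projected concatenation via the homotopy lifting property gives fibrewise transitivity. Your appeal to the relative (rel-endpoints) form of homotopy lifting even handles cleanly a point the paper leaves implicit, namely that the lifted homotopy can be arranged to fix the endpoint lifts so that the resulting fibre path represents the required groupoid arrow.
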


\begin{proof}
    We have already checked that $Q_\gamma$ is a complex manifold and constructed the two commuting actions of the groupoids mentioned. Thus what remains to be proved is that both actions are principal, i.e. that they are free and fibrewise-transitive (on the fibres of $\mu_1$ for the $\fs$ action, and on the fibres of $\mu_0$ for the $\fe$ action).

    We begin by noting that $\pi: \Gamma^*X \to \mathbb{D}$ is a fibration since it is a pullback of the fibration $p: X \to S$. Then since $\mathbb{D}$ is contractible, choosing any basepoints $x \in X_{\gamma(t)}$ and $s \in \mathbb{D}$, the long exact sequence of homotopy groups gives that the inclusion of the fibre $X_{\gamma(t)} \xrightarrow{\iota_t} X$ induces an isomorphism of fundamental groups:

    \begin{center}
        $\dots \cancelto{0}{\pi_{2}(\mathbb{D}, s)} \rightarrow \pi_1(X_{\gamma(t)}, x) \xrightarrow{\cong} \pi_1(\Gamma^*X, x) \rightarrow \cancelto{0}{\pi_1(\mathbb{D}, s)} \dots$
    \end{center}

    \textbf{Freeness.} For the left action to be free, we need that for every $[g_0], [h_0] \in \fs$ and $[\alpha] \in Q_{\gamma}$, $[g_0]*[\alpha] = [h_0]*[\alpha]$ implies that $[g_0] = [h_0]$ in $\fs$.

    By cancellation in the groupoid $\fpt$, $\iota_0([g_0]).[\alpha] = \iota([h_0]).[\alpha]$ implies $\iota_0([g_0]) = \iota_0([h_0])$.
    
    Since $\iota_0$ is a groupoid homomorphism, this implies that $\iota_0([g_0].[h_0]^{-1}) = [e_{g_0(0)}]$, where $e_{g_0(0)}$ is the constant path at $g_0(0)$. This last equality is now an equality inside the fundamental \textit{group} $\pi_1(\Gamma^*X, g_0(0))$.
    
    Letting $x = g_0(0)$, the above long exact sequence argument showed that $\iota_0$ maps $\pi_1(X_{\gamma(0)}, g_0(0))$ \textit{isomorphically} (in particular, injectively) onto $\pi_1(\Gamma^*X, g_0(0))$, which means that we can deduce from $\iota_0([g_0].[h_0]^{-1}) = [e_{g_0(0)}]$ that $[g_0].[h_0]^{-1} = [e_{g_0(0)}]$, this time this being an equality inside the fundamental group of the \textit{fibre} $\pi_1(X_{\gamma(0)}, g_0(0))$.
    Then multiplying both sides of this equality on the right by $[h_0]$ in the fundamental \textit{groupoid} $\fs$ we get that $[g_0] = [h_0]$, which is what we wanted to show for freeness of the left action.

    Exactly the same argument with $X_{\gamma(0)}$ replaced by $X_{\gamma(1)}$ and left multiplication replaced by right multiplication shows freeness of the right action as well.

    \textbf{Fibrewise-transitivity.} We now want to show that the left action is transitive on the fibres of the map $\mu_1$, which would complete proving condition (1) in \Cref{def: Morita equivalence}.

    $\mu_1: Q \rightarrow X_{\gamma(1)}$ was just the map sending a (homotopy class of) path to its end point, so the fibre of $\mu_1$ above $x$ is homotopy classes of paths which end at $x$ and are elements of $Q_\gamma$. We want to show that if two elements $[\alpha], [\alpha'] \in Q_\gamma$ lie in the same fibre (i.e. have the same end point), then they can be related by the $\fs$-action, i.e. that there is $[g_0] \in \fs$ such that $[g_0]*[\alpha] = [\alpha']$.

    So take $[\alpha], [\alpha'] \in Q_\gamma$ with the same endpoint and consider $[\delta] \overset{\text{def}}{=} [\alpha'].[\alpha^{-1}]$, which is now an element of $\Pi_1(\Gamma^*X)$ whose start point $\alpha'(0)$ and end point $\alpha^{-1}(1) = \alpha(0)$ \textit{both} lie in the fibre $X_{\gamma(0)}$. Applying the projection $\pi$, we get that $[\pi \circ \delta]$ has both start and endpoint $\gamma(0)$, thus giving a loop lying in $\pi_1(\mathbb{D}, \gamma(0))$. Since $\mathbb{D}$ is contractible, this means that there exists a homotopy $H$ between $\pi \circ \delta$ and the constant path $e_{\gamma(0)}$ at $\gamma(0)$.

    But $\pi$ is a fibration, which means we have access to the homotopy lifting property: we can lift $H$ to a homotopy $\tilde{H}$ in the total space $\Gamma^*X$ such that $\tilde{H}(s,0) = \delta(s)$. Then $\tilde{\delta} \overset{\text{def}}{=} \tilde{H}(s,1)$ (i.e. the other face of the lifted homotopy $\tilde{H}$) is a path which has to cover the constant path $e_{\gamma(0)}$ in the base (this is part of the homotopy lifting property), so $\tilde{\delta}$ has to lie entirely within the fibre $X_{\gamma(0)}$. Thus we have produced a homotopy $\tilde{H}$ in $\Gamma^*X$ between $\delta$ and a path $\tilde{\delta}$ which lies entirely in the fibre $X_{\gamma(0)}$. Now we can compute that by definition:

    $[\tilde{\delta}]*[\alpha] = 
    \iota_0([\tilde{\delta}].[\alpha]) = $ (since once considered inside $\Gamma^*X$, $\tilde{\delta}$ is homotopic to $\delta$) 
     $[\delta].[\alpha] = $ 
     
     (by definition of $\delta$) $ [\alpha'].[\alpha^{-1}].[\alpha] = [\alpha']$.

     This proves fibrewise-transitivity for the left action.

     Fibrewise-transitivity of the right action is proved by exactly the same argument, again replacing $X_{\gamma(0)}$ by $X_{\gamma(1)}$, left multiplication by right multiplication, and the fibres of $\mu_1$ by the fibres of $\mu_0$.

     This finishes the proof of conditions (2) and (3) in Definition 1, thus proving that $Q_\gamma$ is a Morita equivalence.
  
\end{proof}

\begin{corollary}\label{cor: equivalence of flat connections}
    Let $p: X \to S$ a holomorphic submersion which is also a fibration, and let $\gamma: I \to S$ be a path in $S$. There is a canonically defined equivalence of categories $\mathbb{P}_\gamma: \text{FlatConn}(X_{\gamma(0)}) \to \text{FlatConn}(X_{\gamma(1)})$, induced by $Q_\gamma$.
\end{corollary}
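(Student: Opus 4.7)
The plan is to chain together three facts: the theorem just proved (that $Q_\gamma$ is a Morita equivalence), Proposition 2.5 (Morita equivalences induce equivalences of representation categories), and the Riemann–Hilbert correspondence. First, Theorem 3.3 gives us a Morita equivalence $\Pi_1(X_{\gamma(0)}) \xleftarrow{\mu_0} Q_\gamma \xrightarrow{\mu_1} \Pi_1(X_{\gamma(1)})$. By Proposition 2.5, this yields a canonical equivalence of representation categories $\text{Rep}(\Pi_1(X_{\gamma(0)})) \xrightarrow{\sim} \text{Rep}(\Pi_1(X_{\gamma(1)}))$, defined explicitly by pulling back along $\mu_0$ and pushing down along $\mu_1$ (with respect to the principal $\Pi_1(X_{\gamma(0)})$-action on $\mu_1$).

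Second, to translate this into a statement about flat connections, I would invoke the Riemann–Hilbert correspondence: for any complex manifold $Y$, parallel transport yields an equivalence $\text{FlatConn}(Y) \xrightarrow{\sim} \text{Rep}(\Pi_1(Y))$, which sends a flat vector bundle to its local system, viewed as a representation of the fundamental groupoid. Applying this to both $Y = X_{\gamma(0)}$ and $Y = X_{\gamma(1)}$ and composing with the equivalence of representation categories from the previous step gives the desired functor $\mathbb{P}_\gamma: \text{FlatConn}(X_{\gamma(0)}) \to \text{FlatConn}(X_{\gamma(1)})$.

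Finally, canonicity follows because each ingredient is canonical: $Q_\gamma$ is defined without any auxiliary choices (being the preimage of the unique homotopy class $[e]$ in the contractible thickening $\mathbb{D}$, which itself is independent of the specific $\mathbb{D}$ chosen, as noted after the definition of $Q_\gamma$); the pull-push functor produced by a Morita equivalence is canonical; and the Riemann–Hilbert equivalence is canonical. There is essentially no obstacle here since the heavy lifting was done in Theorem 3.3; the only mild subtlety is bookkeeping the direction of the equivalence (left versus right actions), which matches the convention fixed in the definitions of $\mu_0$ and $\mu_1$.
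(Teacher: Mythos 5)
Your proposal is correct and follows the same route as the paper: apply \Cref{prop: Morita equivalence and representations} to the Morita equivalence $Q_\gamma$ from \Cref{Q_gamma is a Morita equivalence} to get an equivalence of representation categories, then pre- and post-compose with the Riemann--Hilbert equivalence, with canonicity coming from the canonical construction of $Q_\gamma$. The extra detail you give on the pull-push description and on why each ingredient is choice-free is consistent with the paper's exposition.
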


\begin{proof}
    Since $Q_\gamma$ is canonically defined from $\gamma$, we can apply \Cref{prop: Morita equivalence and representations} to get a canonical equivalence of categories $\text{Rep}(\fs) \to \text{Rep}(\fe)$. Then pre and post-composing this with the Riemann-Hilbert functor gives the equivalence of categories we call $\mathbb{P}_\gamma$.
\end{proof}

\subsection{Constructing the morphism of Morita equivalences}\label{subsec:morphism of Morita equivalences}

We now turn our attention to associating to each homotopy of paths (fixing endpoints) $\gamma \overset{H}{\sim} \gamma'$ a morphism of Morita equivalences that we will call $\eta_H$, canonically constructed from $H$.

The idea of the construction is the following: $Q_\gamma$ and $Q_{\gamma'}$ are spaces of possible choices of lifts for $\gamma$ and $\gamma'$ respectively, and then from the homotopy $H$ we should get a space of possible choices of lifts of $H$ into which $Q_\gamma$ and $Q_{\gamma'}$ embed.

\textbf{Construction.} We think of the homotopy $\gamma \overset{H}{\sim} \gamma'$ as given by a map $H: B^2 \to S$, where $B^2$ is the 2-dimensional disk, sending two marked points $b_0$ and $b_1$ to $\gamma(0) = \gamma'(0)$ and $\gamma(1) = \gamma'(1)$ respectively (since $H$ is a homotopy fixing endpoints). Similarly to before, the graph $Gr(H)$ is contractible since it is homeomorphic to the square $B^2$ and we have that $Gr(H) \subset S \times B^2 \subset S \times \mathbb{C}$. Since $Gr(H)$ is contractible, it is contained in a small enough open subset that we denote $\mathbb{D}^2 \subset S \times \mathbb{C} $ such that $\mathbb{D}^2$ is still contractible, and the projection map $S \times \mathbb{C} \to S$ again restricts to give a map $\mathbb{D}^2 \to S$ that we will also denote (by abuse of notation) as $H$. In fact if $\mathbb{D}_\gamma$ and $\mathbb{D}_{\gamma'}$ denote the contractible opens we chose containing $Gr(\gamma)$ and $Gr(\gamma')$ respectively in the previous construction, we can choose $\mathbb{D}^2$ this time to extend $\mathbb{D}_\gamma$ and $\mathbb{D}_{\gamma'}$.

We now form the pullback family along this map $H$:

\[\begin{tikzcd}
	{H^*X \overset{\text{def}}{=} X \times_S \mathbb{D}^2} & X \\
	{\mathbb{D}^2} & S
	\arrow[from=1-1, to=1-2]
	\arrow["{\pi_H}"', from=1-1, to=2-1]
	\arrow["p", from=1-2, to=2-2]
	\arrow["H", from=2-1, to=2-2]
\end{tikzcd}\]

\[\mathbb{D}_\gamma \hookrightarrow \mathbb{D}^2 \hookleftarrow \mathbb{D}_{\gamma'} \]
gives the inclusions
\[\Gamma^*X \hookrightarrow H^*X \hookleftarrow \Gamma'^*X
\]
(where $\Gamma': \mathbb{D}_{\gamma'} \to S$ is the map constructed from $\gamma'$ identically to how $\Gamma: \mathbb{D}_\gamma \to S$ was constructed from $\gamma$), which gives the induced maps on fundamental groupoids
\[ \fpt \xrightarrow{\iota_\gamma} \Pi_1(H^*X) \xleftarrow{\iota_{\gamma'}} \fpts \]

\begin{lemma}\label{lemma: inclusion into homotopy total space is equivalence}
    The maps induced by inclusion $\fpt \xrightarrow{\iota_\gamma} \Pi_1(H^*X)$ and $\Pi_1(H^*X) \xleftarrow{\iota_{\gamma'}} \fpts$ are fully faithful as functors.
\end{lemma}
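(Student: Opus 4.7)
The plan is to prove that $\iota_\gamma$ (and, by an identical argument, $\iota_{\gamma'}$) is in fact a weak homotopy equivalence of the underlying topological spaces, from which fully faithfulness of the induced functor on fundamental groupoids follows immediately. The key structural observation is that $\Gamma^{*}X = \pi_{H}^{-1}(\mathbb{D}_\gamma)$ sits inside $H^{*}X$ as a subfibration of $\pi_H$: for any $x \in \Gamma^{*}X$ with $d := \pi(x) \in \mathbb{D}_\gamma$, the fibre of $\pi$ over $d$ and the fibre of $\pi_H$ over $d$ literally coincide, and $\iota_\gamma$ restricts to the identity on this common fibre $F := X_{H(d)}$. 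This produces a commutative triangle of inclusions in which $F \hookrightarrow H^{*}X$ factors through $\iota_\gamma$ via $F \hookrightarrow \Gamma^{*}X$.

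Since both bases $\mathbb{D}_\gamma$ and $\mathbb{D}^2$ were constructed to be contractible, the long exact sequences of homotopy groups for the Serre fibrations $\pi$ and $\pi_H$---the very same LES argument already deployed in the proof of \Cref{Q_gamma is a Morita equivalence}---force each of the fibre inclusions $F \hookrightarrow \Gamma^{*}X$ and $F \hookrightarrow H^{*}X$ to induce isomorphisms on $\pi_n$ for every $n \geq 1$ and bijections on $\pi_0$, hence to be weak equivalences. Applying two-out-of-three to the commutative triangle then forces $\iota_\gamma$ itself to be a weak homotopy equivalence.

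The final step is to invoke the general fact that a weak homotopy equivalence of spaces induces an equivalence of fundamental groupoids, which is in particular fully faithful as a functor. I do not expect a serious obstacle here: the only bookkeeping point is extracting the $\pi_0$ bijection cleanly from the fibration LES, but since $\mathbb{D}_\gamma$ and $\mathbb{D}^2$ are both path-connected and simply connected, this reduces immediately to $\pi_0(F) \cong \pi_0(\Gamma^{*}X) \cong \pi_0(H^{*}X)$ with no basepoint ambiguity. The argument for $\iota_{\gamma'}$ is obtained by replacing $\mathbb{D}_\gamma$ with $\mathbb{D}_{\gamma'}$ throughout, and in fact as a corollary of this approach one obtains the slightly stronger statement that both inclusions induce equivalences (not merely fully faithful functors) of the fundamental groupoids, though only the fully faithful part is needed for the statement at hand.
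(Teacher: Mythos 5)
Your proof is correct, but it takes a genuinely different route from the paper. The paper proves faithfulness and fullness separately by hand: for faithfulness it projects a homotopy in $H^*X$ down to $\mathbb{D}^2$, uses contractibility of $\mathbb{D}_\gamma$ and $\pi_2(\mathbb{D}^2)=0$ to compare it rel boundary with a homotopy valued in $\mathbb{D}_\gamma$, and lifts the resulting homotopy-of-homotopies through the fibration $\pi_H$; for fullness it connects the projected endpoints by a path in $\mathbb{D}_\gamma$ and lifts the homotopy in $\mathbb{D}^2$ to push a representative arrow into $\Gamma^*X$. You instead observe that $\Gamma^*X=\pi_H^{-1}(\mathbb{D}_\gamma)$ is a subfibration with the same fibre $F$, that both fibre inclusions $F\hookrightarrow\Gamma^*X$ and $F\hookrightarrow H^*X$ are weak homotopy equivalences by the same long-exact-sequence argument already used in the proof of \Cref{Q_gamma is a Morita equivalence} (the bases being contractible), and conclude by two-out-of-three that $\iota_\gamma$ is a weak homotopy equivalence, hence induces an equivalence (in particular a fully faithful functor) of fundamental groupoids. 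This is sound: the only points you should spell out are the ones you flag, namely that $\pi_0$-injectivity of the fibre inclusion uses triviality of the $\pi_1$-action of the contractible base on $\pi_0(F)$, and that two-out-of-three for weak homotopy equivalences at arbitrary basepoints of $\Gamma^*X$ uses the $\pi_0$-surjectivity of $F\hookrightarrow\Gamma^*X$ together with change-of-basepoint isomorphisms. What your route buys is brevity, reuse of an argument already in the paper, and a strictly stronger conclusion (essential surjectivity as well, so $\iota_\gamma$ and $\iota_{\gamma'}$ are equivalences of groupoids); what the paper's explicit lifting argument buys is a self-contained construction that exhibits concrete representatives inside $\Gamma^*X$ (and hence explicit preimages for fullness), in the same hands-on style as the transitivity argument of \Cref{Q_gamma is a Morita equivalence}. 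Both arguments rely equally on the choice $\mathbb{D}_\gamma\subset\mathbb{D}^2$ made in the construction, and both transfer verbatim to the punctured family used later in the logarithmic case, so your version would serve the same role in the proof of \Cref{lemma: inclusion into logarithmic homotopy total space is equivalence}.
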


\begin{proof}
    To prove faithfulness, we must show that for $[\alpha], [\beta] \in \fpt$, $\iota_\gamma([\alpha]) = \iota_\gamma([\beta])$ implies $[\alpha] = [\beta]$. The given equality says that there is a homotopy $h$ between $\iota_\gamma(\alpha)$ and $\iota_\gamma(\beta)$ in $H^*X$. Then $\pi_H \circ h$ is a homotopy in $\mathbb{D}^2$ whose boundary is a loop that lies entirely in $\mathbb{D}_\gamma$. Since $\mathbb{D}_\gamma$ is contractible, the loop is homotopic to a constant path in $\mathbb{D}_\gamma$. Denoting by $g$ a homotopy between this loop and constant path, we now have two maps $\pi_H \circ h$ and $g$ that share the same boundary, giving a map from $S^2$ into $\mathbb{D}^2$. Since $\pi_2(\mathbb{D}^2, s) = 0$, $\pi_H$ is homotopic to $g$ relative to their shared boundary. Letting $K$ denote the homotopy of homotopies witnessing this, we can use the homotopy lifting property to lift $K$ to $\tilde{K}$, a homotopy in $H^*X$ such that $\tilde{K}(-, 0) = h$. Letting $\tilde{h} = \tilde{K}(-,1)$, $\tilde{h}$ is now a homotopy between $\alpha$ and $\beta$ such that $\pi_H \circ \tilde{h} = g$ which takes values in $\mathbb{D}_\gamma$, i.e. the image of $\tilde{h}$ lies in $\pi_H^{-1}(\mathbb{D}_\gamma) = \Gamma^*X$. Thus $\tilde{h}$ is a homotopy between $\alpha$ and $\beta$ in $\Gamma^*X$, giving that $[\alpha] = [\beta]$ and finishing the proof of faithfulness of $\iota_\gamma$.

    We now prove fullness. Take $[\delta] \in \text{Hom}(x,y) \subset \fh|_{\Gamma^*X}$. We can apply the map induced by the projection map $\pi_H$ to get an element ${\pi_H}_*([\delta]) \in \Pi_1(\mathbb{D}^2)$ whose source $s({\pi_H}_*([\delta]))$ and target $t({\pi_H}_*([\delta]))$ both lie in $\Gamma^*X$ (since $x$ and $y$ lie in $\Gamma^*X$ and ${\pi_H}_*$ is a groupoid homomorphism). Since $\mathbb{D}_\gamma$ is path-connected, there is a path $\beta$ in $\mathbb{D}_\gamma$ connecting the points $s({\pi_H}_*([\delta]))$ and $t({\pi_H}_*([\delta]))$. Since the larger base $\mathbb{D}^2$ is itself contractible and $\pi_H \circ \delta$ and $\beta$ have the same endpoints in $\mathbb{D}^2$, there exists a homotopy $h$ between them in $\mathbb{D}^2$. Now since $\pi_H$ is a fibration, by the homotopy lifting property we can choose a lift $\tilde{h}$ of $h$ such that $\tilde{h}(0,t) = \delta(t)$ and $\pi_H \circ\tilde{h} = h$. Defining $\tilde{\delta}(t) = \tilde{h}(1,t)$, we then have that $\pi_H(\tilde{\delta}(t)) = \pi_H(\tilde{h}(t,1)) = h(t,1) = \beta(t)$. Since $\beta$ is entirely contained in $\mathbb{D}_\gamma$, we have that $\tilde{\delta}$ is entirely contained in $\pi_H^{-1}(\mathbb{D}_\gamma) = \Gamma^*X$, with $\tilde{\delta}$ homotopic to $\delta$ via $\tilde{h}$ in $H^*X$. Thus $[\tilde{\delta}] \in \fpt$ with $\iota_\gamma([\tilde{\delta}]) = [\delta]$, proving fullness. Identical arguments prove faithfulness and fullness of $\iota_{\gamma'}$ as well.
\end{proof}

We can now construct the map $\eta_H: Q_\gamma \to Q_{\gamma'}$ corresponding to the homotopy $H$.

\begin{notation}
    Below we denote by $\alpha$ a groupoid arrow of $\fpt$ rather than a path representative for some homotopy class of path like was done above. This is because the arguments we will give here do not require picking explicit path representatives at all, and it suffices to work with the abstraction of groupoid arrows. This avoids clutter and will also come in handy when we deal with the case of logarithmic singularities and replace fundamental groupoids with twisted fundamental groupoids.
\end{notation}

\begin{theorem}\label{eta_H is morphism}

Let $\fpt \xrightarrow{\iota_\gamma} \Pi_1(H^*X) \xleftarrow{\iota_{\gamma'}} \fpts$ be the maps induced by inclusion. Let $\eta_H: Q_\gamma \to Q_{\gamma'}$ be defined by $\eta_H(\alpha) = \iota_{\gamma'}^{-1} \circ \iota_\gamma(\alpha)$. Then $\eta_H$ is a morphism of Morita equivalences.
\end{theorem}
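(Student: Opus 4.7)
The plan is to verify three properties of $\eta_H$ in turn: well-definedness as a set-map $Q_\gamma \to Q_{\gamma'}$, holomorphicity, and biequivariance for both the $\fs$ and $\fe$ actions. For well-definedness, take $\alpha \in Q_\gamma$. By \Cref{lemma: endpoints and Q_gamma}(1), its source and target lie in $X_{\gamma(0)} = X_{\gamma'(0)}$ and $X_{\gamma(1)} = X_{\gamma'(1)}$ respectively, since $H$ fixes endpoints. The image $\iota_\gamma(\alpha) \in \fh$ has the same endpoints, which therefore sit inside $\Gamma'^*X \subset H^*X$. By \Cref{lemma: inclusion into homotopy total space is equivalence}, $\iota_{\gamma'}$ is fully faithful, so there exists a unique $\beta \in \fpts$ with $\iota_{\gamma'}(\beta) = \iota_\gamma(\alpha)$. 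Since $\beta$ has endpoints in $X_{\gamma'(0)}$ and $X_{\gamma'(1)}$, \Cref{lemma: endpoints and Q_gamma}(2) applied to $\gamma'$ places it in $Q_{\gamma'}$, so $\eta_H$ is a well-defined map of sets.

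For holomorphicity, I would exploit the fact that for any complex manifold $Y$, the target-source map $(t, s): \Pi_1(Y) \to Y \times Y$ is a local biholomorphism, giving local endpoint coordinates on fundamental groupoids. Restricted to $Q_\gamma$, this realizes $Q_\gamma$ locally as an open subset of $X_{\gamma(0)} \times X_{\gamma(1)}$ (the restriction being injective since $Q_\gamma$ is a single fibre of the holomorphic submersion $\pi_*$ and contractibility of $\mathbb{D}$ forces uniqueness of the covering homotopy class between any pair of endpoints), and analogously for $Q_{\gamma'}$. Since $\iota_\gamma$ and $\iota_{\gamma'}$ act as the identity on underlying points, $\eta_H$ preserves pairs of endpoints by construction, so in these local charts it is the identity map on $X_{\gamma(0)} \times X_{\gamma(1)} = X_{\gamma'(0)} \times X_{\gamma'(1)}$, which is manifestly holomorphic.

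For biequivariance, the key observation is that the two compositions
\[ \fs \xrightarrow{\iota_0^\gamma} \fpt \xrightarrow{\iota_\gamma} \fh \qquad \text{and} \qquad \fs \xrightarrow{\iota_0^{\gamma'}} \fpts \xrightarrow{\iota_{\gamma'}} \fh \]
coincide, because both are induced by the single inclusion $X_{\gamma(0)} = X_{\gamma'(0)} \hookrightarrow H^*X$ of complex manifolds. Then for $g \in \fs$ and $\alpha \in Q_\gamma$, functoriality of $\iota_\gamma$ and $\iota_{\gamma'}$ together with the definition of $\eta_H$ yield
\[ \iota_{\gamma'}(g * \eta_H(\alpha)) = \iota_{\gamma'}(\iota_0^{\gamma'}(g)) \cdot \iota_{\gamma'}(\eta_H(\alpha)) = \iota_\gamma(\iota_0^{\gamma}(g)) \cdot \iota_\gamma(\alpha) = \iota_\gamma(g * \alpha) = \iota_{\gamma'}(\eta_H(g * \alpha)), \]
and injectivity of $\iota_{\gamma'}$ forces $g * \eta_H(\alpha) = \eta_H(g * \alpha)$. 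Right-equivariance for $\fe$ is proved by the identical argument applied on the opposite side. The main obstacle I anticipate is the holomorphicity step, since carefully verifying that $(t,s)|_{Q_\gamma}$ is a local biholomorphism onto its image requires combining the groupoid-chart description of $\Pi_1(\Gamma^*X)$ with the submersion property of $\pi_*$; all remaining pieces are essentially formal consequences of \Cref{lemma: endpoints and Q_gamma} and \Cref{lemma: inclusion into homotopy total space is equivalence} together with the compatibility of the various inclusions of complex manifolds.
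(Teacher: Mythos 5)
Your well-definedness and biequivariance arguments coincide with the paper's own proof: well-definedness via \Cref{lemma: endpoints and Q_gamma} together with full faithfulness of $\iota_{\gamma'}$ from \Cref{lemma: inclusion into homotopy total space is equivalence}, and biequivariance via the observation that $\iota_\gamma\circ\iota_0^{\gamma}$ and $\iota_{\gamma'}\circ\iota_0^{\gamma'}$ are both induced by the single inclusion $X_{\gamma(0)}\hookrightarrow H^*X$, followed by cancellation using injectivity of $\iota_{\gamma'}$; if anything your version of the equivariance computation is more explicit than the paper's.

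The gap is in the holomorphicity step. The parenthetical claim that $(t,s)|_{Q_\gamma}$ is injective because ``contractibility of $\mathbb{D}$ forces uniqueness of the covering homotopy class between any pair of endpoints'' is false: uniqueness of the homotopy class holds downstairs in $\mathbb{D}$, not upstairs in $\Gamma^*X$. Two elements of $Q_\gamma$ with the same endpoints differ by a loop in the fibre, and the fibres generically have nontrivial $\pi_1$ --- indeed this is the whole point of the construction, since the fibres of $\mu_1: Q_\gamma \to X_{\gamma(1)}$ are torsors for $\pi_1(X_{\gamma(0)})$. So endpoint pairs are only \emph{local} coordinates on $Q_\gamma$, and the assertion that ``$\eta_H$ is the identity in these charts'' does not follow from the mere fact that $\eta_H$ preserves endpoints: since many elements of $Q_{\gamma'}$ share the same endpoint pair, any endpoint-preserving map --- including discontinuous ones obtained by composing $\eta_H$ with a wildly varying family of deck-type automorphisms --- satisfies your description, so neither continuity nor holomorphicity can be extracted this way. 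What is actually needed (and is the content of the paper's terse ``composition of holomorphic maps'') is that $\iota_\gamma$ and $\iota_{\gamma'}$ are holomorphic morphisms of Lie groupoids which are local biholomorphisms on arrow spaces, because $\Gamma^*X$ and $\Gamma'^*X$ are \emph{open} in $H^*X$ and in endpoint charts these maps are restrictions of the identity; combined with the injectivity of $\iota_{\gamma'}$ given by \Cref{lemma: inclusion into homotopy total space is equivalence}, this makes $\iota_{\gamma'}$ an open holomorphic embedding, so $\iota_{\gamma'}^{-1}$ is holomorphic on its image and $\eta_H=\iota_{\gamma'}^{-1}\circ\iota_\gamma$ is holomorphic where defined.
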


\begin{proof} We first need to check that $\eta_H$ is well-defined, and takes $Q_\gamma$ to $Q_{\gamma'}$.
    Picking $\alpha \in Q_\gamma \subset \fpt$, we can look at its image under $\iota_\gamma$ to get an element $\iota_\gamma(\alpha) \in \fh$. Note that since $\alpha \in Q_\gamma$, the source $s(\alpha)$ lies in $X_{\gamma(0)} = X_{\gamma'(0)}$ and the target $t(\alpha)$ lies in $X_{\gamma(1)} = X_{\gamma'(1)}$ by \Cref{lemma: endpoints and Q_gamma}. Since $\iota_\gamma$ is a groupoid homomorphism, this gives that $s(\iota_\gamma(\alpha)) \in X_{\gamma(0)} = X_{\gamma'(0)}$ and $t(\iota_\gamma(\alpha)) = X_{\gamma(1)} = X_{\gamma'(1)}$. Now since $X_{\gamma'(0)}, X_{\gamma'(1)} \subset \Gamma'^*X$ and $\iota_{\gamma'}$ is fully faithful by \Cref{lemma: inclusion into homotopy total space is equivalence}, $\iota_\gamma(\alpha)$ has a unique preimage in $\fpts$ under $\iota_{\gamma'}$, which is what we are defining to be $\eta(\alpha)$.
    
    Applying the projection $\pi_*: \fpts \to \mathbb{D}$ to $\eta_H(\alpha)$, we have that $s(\pi_*(\eta_H(\alpha))) = \gamma(0) = \gamma'(0)$ and $t(\pi_*(\eta_H(\alpha))) = \gamma(1) = \gamma'(1)$. Then by \Cref{lemma: endpoints and Q_gamma} again for $\fpts$, we have that $\alpha \in Q_{\gamma'}$.

 Note that this map is holomorphic since it is given by a composition of holomorphic maps. It is biequivariant since the actions are defined by inclusion and then groupoid multiplication in the groupoids $\fpt$ and $\fpts$, which are themselves included into $\fh$ by the above lemma, i.e. 
for $g \in \fs$:
\[\eta_H(g*\alpha) = \iota_{\gamma'}^{-1} \circ \iota_{\gamma}(\iota_0(g).\alpha) = \iota_0(g).(\iota_{\gamma'}^{-1} \circ \iota_\gamma(\alpha)) = g * \eta_H(\alpha)
\]
and for $h \in \fe$:
\[ \eta_H(\alpha * h) = \iota_{\gamma'}^{-1} \circ \iota_{\gamma}(\alpha . \iota_1(h)) = (\iota_{\gamma'}^{-1} \circ \iota_{\gamma}(\alpha)) . \iota_1(h) = \eta_H(\alpha) * h
\]

Thus $\eta_H$ is a biequivariant holomorphic map $Q_\gamma \to Q_{\gamma'}$, which means it is a morphism of Morita equivalences.
\end{proof}

\section{Categorification of logarithmic isomonodromy}\label{sec: logarithmic}

Using the ideas developed in the previous section, we now construct the Morita equivalence corresponding to a chosen path $\gamma$ in the logarithmic case, i.e. for a family of pairs $(X,D) \to S$ where $D$ is reduced divisor. Similarly to before, in \Cref{subsec: log morphism} we follow this up by constructing the morphism of Morita equivalences corresponding to a chosen homotopy $H$.

\subsection{Review of twisted fundamental groupoids}\label{sec: twisted fundamental groupoids}

We will replace the fundamental groupoids $\Pi_1(X_s)$ appearing in the previous section with twisted fundamental groupoids.

Given a complex manifold $X$ with a reduced smooth divisor $D$, one can consider the category of flat connections on $X$ with singularities along $D$. This category  is equivalent to the category of representations of the Lie algebroid $T_X(-\log D)$, whose sections are vector fields on $X$ which are tangent to $D$ along $D$. These Lie algebroids have source-simply connected integrations that we will denote by $\Pi_1(X, \log D)$.

In the case that $X$ is a curve with divisor $D$, the Lie algebroid $T_X(- \log D)$ is equivalent to $T_X(-D)$, the Lie algebroid whose sections are vector fields on $X$ which vanish along $D$, because in this case $D$ is 0-dimensional. The integrations of these algebroids are thus denoted by $\Pi_1(X,D)$ in \cite{Stokes groupoids}. We will use that the category of representations of $\Pi_1(X, D)$ is equivalent to the category of representations of its Lie algebroid by Lie's second theorem for Lie groupoids \Cref{prop: Lie II}. One can view this as a generalized Riemann-Hilbert correspondence for flat connections with singularities along a divisor. From this point of view, the generalized Riemann-Hilbert correspondence is the purely formal identification of meromorphic flat connections with representations of a Lie algebroid, composed with the transcendental procedure of integrating Lie algebroids to Lie groupoids.
\[ \text{FlatConn}_X(D) \cong \text{Rep}(T_X(-D))\cong\text{Rep}(\Pi_1(X,D))\]

\begin{remark}
    The integrations $\Pi_1(X, \log D)$ can have a space of arrows which is non-Hausdorff in general. In \cite{Stokes groupoids}, the authors study the case of complex curves and show that the only case in which this groupoid fails to be Hausdorff is for $(X, D) = (\mathbb{P}^1, p)$. Bischoff studies $\Pi_1(X, \log D)$ for higher dimensional $X$ without imposing the Hausdorffness restraint in \cite{Bischoff log connections}. In this paper, we will only need to study higher dimensional complex manifolds which are families of curves over a contractible base. Since the obstruction to being Hausdorff (see \cite{Stokes groupoids}) is a purely homotopical condition (in fact just a condition on fundamental groups), the Hausdorffness of our groupoids reduces to the case of curves; thus by \cite{Stokes groupoids}, we only need to exclude the case $(\mathbb{P}^1,p)$ if we wish for all our constructions to remain within the category of (Hausdorff) complex manifolds. However, even this is not necessary, as our constructions go through even allowing this one non-Hausdorff example, with the understanding that the notion of Lie groupoid may now refer to examples where the space of arrows is possibly non-Hausdorff, as is sometimes taken to be the case in the literature.
\end{remark}

\subsection{Construction of the Morita equivalence in the logarithmic case}\label{subsec:log Q_gamma}

Fix $p: X \to S$ to be a holomorphic submersion which is also a fibration, with the \textit{fibres being complex curves}, and let $D$ be a reduced smooth divisor such that the restriction $p|_D: D \to S$ is a cover of $S$. (Note that if we had let $D$ more generally be a normal crossings divisor, the condition of being a cover of $S$ will imply that there are actually no singularities.)

For a path $\gamma$ in the base $S$, we constructed in the previous section a contractible open subset $\mathbb{D} \subset S \times \mathbb{C}$, equipped with the restriction of the projection map $\Gamma: \mathbb{D} \to S$. We can now pullback the family of pairs $(X,D) \to S$ along this map $\Gamma$:

\[\begin{tikzcd}
	{(\Gamma^*X, \Gamma^*D) \overset{\text{def}}{=} (X \times_S \mathbb{D}, D \times_S \mathbb{D})} & {(X,D)} \\
	{\mathbb{D}} & S
	\arrow["{\pi_X}", from=1-1, to=1-2]
	\arrow["\pi"', from=1-1, to=2-1]
	\arrow["p", from=1-2, to=2-2]
	\arrow["\Gamma"', from=2-1, to=2-2]
\end{tikzcd}\]

The map $\pi: \Gamma^*X \to \mathbb{D}$ is again a holomorphic submersive (topological) fibration over the contractible base $\mathbb{D}$ (since it is a pullback of a holomorphic submersive fibration). The restriction $\pi|_{\Gamma^*D}: \Gamma^*D \to \mathbb{D}$ is a cover of the contractible base $\mathbb{D}$, and it is thus a trivial cover $\Gamma^*D \cong F \times \mathbb{D}$ (for $F$ a discrete space). Note that this also means that $\Gamma^*D$ is a reduced divisor in $\Gamma^*X$, since it is the pullback of a reduced divisor under $\Gamma$, which factors as an injection following by a projection.

We can then form the twisted fundamental groupoid $\fptl$. We would like to have a version of the inclusion maps $\iota_0$ and $\iota_1$ from \Cref{sec:nonsingular}. To construct these, take the inclusion of pairs
\[ (X_\tau, D_\tau) \hookrightarrow (\Gamma^*X, \Gamma^*D)
\]
and look at the induced map on Lie algebroids
\[ T_{X_\tau}(-D_\tau) \rightarrow T_{\Gamma^*X}(-\Gamma^*D)
\]
given by taking the differential of the inclusion. Note that one could also obtain the same map by looking at the short exact sequence of bundles
\[0 \to T_{\Gamma^*X/\mathbb{D}}(-\log \Gamma^*D) \to T_{\Gamma^*X}(- \log \Gamma^*D) \to \pi^*T\mathbb{D} \to 0
\]
and restricting to the fibre $X_\tau$: then $T_{\Gamma^*X/\mathbb{D}}(-\log \Gamma^*D)|_{X_\tau}$ is naturally isomorphic to $T_{X_\tau}(-D_\tau)$ (recall that $X_\tau$ is a curve), and the map above is given by the inclusion of the relative logarithmic tangent bundle into the total logarithmic tangent bundle.

We can then apply \Cref{prop: Lie II} to integrate this map of Lie algebroids to a map between the source-simply connected integrations $\fml \to \fptl$ that we will denote by $\iota_\tau$. 

Since the twisted fundamental groupoid restricted to the complement of the divisor is isomorphic to the fundamental groupoid of the complement by \cite{Stokes groupoids} (Lemma 3.12), i.e. $\ffl|_{X_\tau \backslash D_\tau}
\cong \Pi_1(X_\tau \backslash D_\tau)$, and $\iota_\tau$ was produced by integrating the differential of the inclusion map of fibres, we have that $\iota_\tau$ restricted to the complement of the divisor is just the map of fundamental groupoids induced by inclusion of the fibre:
\[\iota_\tau|_{X_\tau \backslash D_\tau} : \Pi_1(X_\tau \backslash D_\tau) \to \Pi_1(\Gamma^*X \backslash \Gamma^*D). \]

We can now construct the Morita equivalence associated to a path $\gamma$.

\begin{notation}
    We will again denote the Morita equivalence associated to a path $\gamma$ by $Q_\gamma$, just as in the nonsingular case. This should hopefully cause no confusion as the nonsingular Morita equivalences will not appear from now on.
\end{notation}

This time, we will use the criterion given in \Cref{lemma: criterion for Morita subgroupoid} to produce the Morita equivalence, rather than building the `space of choices of lifts of $\gamma$' by hand like in the nonsingular case, but the result at the end will admit a similar interpretation.

\begin{theorem}\label{thm: log Q_gamma is Morita equivalence}
    Let $t$ and $s$ denote the source and target maps of $\fptl$ respectively. Then $t^{-1}(X_\tau)$ gives a Morita equivalence between $\ffl$ and $\fptl$.
\end{theorem}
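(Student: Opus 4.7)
The strategy is to apply \Cref{lemma: criterion for Morita subgroupoid} to the Lie groupoid $\mathcal{G} = \fptl$ with the transversal submanifold $N = X_\tau$, and then identify the resulting restricted groupoid $\mathcal{G}|_{X_\tau}$ with $\ffl$. The lemma will immediately supply a Morita equivalence $t^{-1}(X_\tau)$ between $\mathcal{G}|_{X_\tau}$ and $\mathcal{G}$, so the real content of the theorem is the identification at the end.

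First I would verify the two hypotheses of \Cref{lemma: criterion for Morita subgroupoid}. For transversality of $X_\tau$ to the anchor $\rho: T_{\Gamma^*X}(-\log \Gamma^*D) \to T\Gamma^*X$: at points $p \in X_\tau \setminus D_\tau$ the anchor is an isomorphism, so transversality is automatic; at $p \in D_\tau$, the image of $\rho$ is $T_p \Gamma^*D$, and since $\pi|_{\Gamma^*D}$ is a covering, $d\pi$ is injective on $T_p\Gamma^*D$, so $T_p\Gamma^*D \cap T_pX_\tau = T_p\Gamma^*D \cap \ker d\pi = 0$ and a dimension count gives the desired sum. For the orbit condition, the orbits of $\mathcal{G}$ are the open orbit $\Gamma^*X \setminus \Gamma^*D$ (which is connected, since each fibre $X_\tau$ minus its discrete set $D_\tau$ is connected and $\mathbb{D}$ is connected) together with the connected components of $\Gamma^*D \cong F \times \mathbb{D}$; the fibre $X_\tau$ meets the open orbit because $X_\tau \setminus D_\tau$ is nonempty, and it meets each component $\{f\} \times \mathbb{D}$ of $\Gamma^*D$ at the single point $(f, \tau)$ of $D_\tau$.

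The remaining task is to identify $\mathcal{G}|_{X_\tau}$ with $\ffl$. The Lie algebroid of $\mathcal{G}|_{X_\tau}$ is the transversal pullback $\rho^{-1}(TX_\tau)|_{X_\tau}$; a pointwise calculation checks that this agrees with $T_{X_\tau}(-D_\tau)$, since away from $D_\tau$ the anchor identifies it with $TX_\tau$, while at a divisor point the transversality computed above forces the pullback to reduce to the one-dimensional kernel of $\rho$ (the residue line of the log tangent bundle), matching the local model of $T_{X_\tau}(-D_\tau)$ near a puncture. By \Cref{prop: Lie II}, the identity map on these Lie algebroids integrates to a morphism of Lie groupoids $\ffl \to \mathcal{G}|_{X_\tau}$, and to conclude it suffices to show this is an isomorphism, i.e. that $\mathcal{G}|_{X_\tau}$ is source-simply connected. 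I expect this to be the main obstacle: over points of $X_\tau \setminus D_\tau$ it reduces via \cite{Stokes groupoids} (Lemma 3.12) to showing the inclusion $X_\tau \setminus D_\tau \hookrightarrow \Gamma^*X \setminus \Gamma^*D$ is a $\pi_1$-isomorphism, which follows from the long exact sequence of the fibration over the contractible base $\mathbb{D}$ exactly as in the nonsingular proof of \Cref{Q_gamma is a Morita equivalence}; near divisor points, the analogous statement must be extracted from the explicit local form of the twisted fundamental groupoid of a curve near a puncture described in \cite{Stokes groupoids}, where the source fibres are built from the normal bundle data and again only depend on the fibre direction because $\mathbb{D}$ is contractible.
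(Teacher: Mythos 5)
Your setup is the same as the paper's: invoke \Cref{lemma: criterion for Morita subgroupoid} with $N = X_\tau$ (your verification of transversality and of the orbit condition is fine, and in fact more detailed than the paper's), identify the Lie algebroid of $\fptl|_{X_\tau}$ with $T_{X_\tau}(-D_\tau)$, and integrate the identity map by \Cref{prop: Lie II} to get a morphism $\ffl \to \fptl|_{X_\tau}$. Your reduction of the remaining step to ``$\fptl|_{X_\tau}$ is source-simply connected'' is also logically valid, since two source-simply connected integrations of the same algebroid are canonically isomorphic.

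The problem is that this last step — which is exactly the content of the theorem beyond the quoted lemma — is not actually carried out. Away from the divisor you correctly reduce it to the statement that $\pi_1(X_\tau \setminus D_\tau) \to \pi_1(\Gamma^*X \setminus \Gamma^*D)$ is an isomorphism, but at points of $D_\tau$ you only say the claim ``must be extracted from the explicit local form of the twisted fundamental groupoid of a curve near a puncture.'' That is an acknowledged gap, not an argument; moreover the relevant source fibres are those of the groupoid of the \emph{surface} $(\Gamma^*X, \Gamma^*D)$ intersected with $t^{-1}(X_\tau)$, so the curve-local model of \cite{Stokes groupoids} does not apply directly — one would need the structure of $\Pi_1(\Gamma^*X,\log\Gamma^*D)$ along the divisor (e.g.\ from \cite{Bischoff log connections}) to see that these fibres are connected and simply connected. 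The paper avoids this analysis entirely: since $\ffl$ is source-simply connected and the map integrates an isomorphism of algebroids, the integrated map is a covering (hence surjective), and it is injective on the dense open complement of the divisor by the nonsingular argument of \Cref{sec:nonsingular}; a covering map injective on a dense open is injective everywhere, so it is an isomorphism without ever examining source fibres of $\fptl|_{X_\tau}$ at divisor points. Either supply the divisor-point analysis your route requires, or switch to this covering/density argument for the final step.
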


\begin{proof}
    $X_\tau$ intersects both orbits $\Gamma^*X \backslash \Gamma^*D$ (since it contains points in $X_\tau \backslash D_\tau$) and $\Gamma^*D$ (since it contains points of $D_\tau$) of the groupoid $\fptl$, and it is transverse to the anchor map $\rho$. Thus by \Cref{lemma: criterion for Morita subgroupoid}, $t^{-1}(X_\tau)$ gives a Morita equivalence between $\fptl|_{X_\tau}$ and $\fptl$. We are thus left to show that $\ffl \cong \fptl|_{X_\tau}$.

    First note that the Lie algebroid of $\fptl|_{X_\tau}$ is also given by $T_{X_\tau}(-D_\tau)$: we have an injective map from $T_{X_\tau}(-D_\tau)$ to the Lie algebroid of $\fptl|_{X_\tau}$, and this map is surjective because every vector field on $X_\tau$ that vanishes along $D_\tau$ can be locally extended  to a neighbourhood of $X_\tau$ in $\Gamma^*X$ by considering a local expression for the vector field as given by $z \partial_z$ (with $z$ a coordinate on $X_\tau$ vanishing at a chosen point in $D_\tau$). Now, take the identity map from the Lie algebroid $T_{X_\tau}(-D_\tau)$ to itself, and use \Cref{prop: Lie II} to integrate it to a map $\ffl \to \fptl|_{X_\tau}$ - by definition this is the map $\iota_\tau$ defined above. Since this map integrates the identity map on Lie algebroids, it is a covering map (since it factors through the source-simply connected cover of the codomain), and is thus surjective. 
    
    We now check injectivity of this map. Note that over the complement of the divisor $\iota_\tau|_{X_\tau \backslash D_\tau}$ agrees with the map on actual fundamental groupoids induced by inclusion, and so it is an injective map here by the results of the previous section (or since it is the map induced on fundamental groupoids by a homotopy equivalence, where the homotopy equivalence is given by the inclusion of a fibre into the total space for a fibration with a contractible base). We now have a covering map which is injective on the dense open $X_\tau \backslash D_\tau$. Now if the map sends two distinct points to the same image, it would map their disjoint local neighborhoods to the same target. Since the dense open set intersects both disjoint local neighbourhoods, it would contain distinct points from each neighbourhood mapping to the same value, contradicting the hypothesis that the map is injective on the dense open. Thus the map must be injective everywhere.
    We conclude that we have isomorphism $\ffl \xrightarrow{\cong} \fptl|_{X_\tau}$, which we can compose with the obtained Morita equivalence to get a Morita equivalence between $\ffl$ and $\fptl$.
\end{proof}

Note that by using \Cref{rem:switching s and t for Morita equivalence}, $s^{-1}(X_\tau)$ would give a Morita equivalence between $\fptl$ and $\ffl$ as well.

\begin{definition}
    For $\gamma$ a path in $S$, we can compose the Morita equivalences given by $s^{-1}(X_{\gamma(0)})$ and $t^{-1}(X_{\gamma(1)})$ to get a Morita equivalence between $\fsl$ and $\fel$ that we call $Q_\gamma$.
\end{definition}

\begin{remark}
    Even though we have used a slightly different presentation of the ideas in the logarithmic case, making explicit use of the criterion for a Morita equivalent subgroupoid \Cref{lemma: criterion for Morita subgroupoid}, the composition of $s^{-1}(X_{\gamma(0)})$ with $t^{-1}(X_{\gamma(1)})$ is exactly given by groupoid arrows in the twisted fundamental groupoid of the total space $\fptl$, which under the map $\fptl \xrightarrow{\pi_*} \Pi_1(\mathbb{D})$ (given by integrating the differential of the map $\pi: (\Gamma^*X, \Gamma^*D) \to \mathbb{D}$) project to the homotopy class of the canonical path $e: t \mapsto (\gamma(t), t)$. This is because the image of such arrows under $\pi_*$ have source $(\gamma(0), 0)$ and target $(\gamma(1), 1)$, and $\mathbb{D}$ is contractible, so any path connecting these two endpoints has to be homotopic to $e$.
\end{remark}

\begin{corollary}
    Let $p: X \to S$ be a holomorphic submersion which is also a fibration, with the \textit{fibres being complex curves}, and let $D$ be a reduced smooth divisor such that the restriction $p|_D: D \to S$ is a cover of $S$. Let $\gamma: I \to S$ be a path in $S$. There is a canonically defined equivalence of categories $\mathbb{P}_\gamma: \text{FlatConn}_{X_{\gamma(0)}}(D_{\gamma(0)}) \to \text{FlatConn}_{X_{\gamma(1)}}(D_{\gamma(1)})$ induced by $Q_\gamma$.
\end{corollary}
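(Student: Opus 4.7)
The proof proposal is essentially to mirror the argument given for Corollary~\ref{cor: equivalence of flat connections} in the nonsingular case, substituting the logarithmic Morita equivalence constructed in this section for the one constructed in Section~\ref{sec:nonsingular}, and substituting the generalized Riemann--Hilbert correspondence recalled in Section~\ref{sec: twisted fundamental groupoids} for the usual one.

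First I would invoke the preceding definition to obtain $Q_\gamma$ as a Morita equivalence between $\fsl$ and $\fel$, noting that the construction (as a composition of $s^{-1}(X_{\gamma(0)})$ and $t^{-1}(X_{\gamma(1)})$ inside $\fptl$) is canonically determined by $\gamma$ because the only auxiliary data, the contractible thickening $\mathbb{D} \supset Gr(\gamma)$, enters only through $\fptl$ and is immaterial by the same remark used in the nonsingular case: restricting to any smaller contractible thickening containing $Gr(\gamma)$ yields the same twisted fundamental groupoid restricted along the fibres at $\gamma(0)$ and $\gamma(1)$. Then I would apply Proposition~\ref{prop: Morita equivalence and representations} to $Q_\gamma$ to obtain a canonical equivalence
\[
\text{Rep}(\fsl) \xrightarrow{\ \sim\ } \text{Rep}(\fel).
\]

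Next I would pre- and post-compose this equivalence with the generalized Riemann--Hilbert equivalences
\[
\text{FlatConn}_{X_{\gamma(i)}}(D_{\gamma(i)}) \cong \text{Rep}\bigl(T_{X_{\gamma(i)}}(-D_{\gamma(i)})\bigr) \cong \text{Rep}(\fml|_{\tau = \gamma(i)})
\]
for $i = 0, 1$, recalled in Section~\ref{sec: twisted fundamental groupoids}; the first identification is purely formal, and the second is Lie's second theorem for Lie groupoids (Proposition~\ref{prop: Lie II}), applied because $\fml$ is source-simply connected and so representations of the groupoid agree with representations of the algebroid. Composing these three equivalences defines $\mathbb{P}_\gamma$, an equivalence of categories from $\text{FlatConn}_{X_{\gamma(0)}}(D_{\gamma(0)})$ to $\text{FlatConn}_{X_{\gamma(1)}}(D_{\gamma(1)})$, canonical because each of the three ingredients is canonical in the data $\gamma$.

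There is no serious obstacle here: the content of the corollary is entirely packaged into Theorem~\ref{thm: log Q_gamma is Morita equivalence} and the machinery of Section~\ref{sec: Morita equivalences}, and this corollary is the logarithmic analogue of Corollary~\ref{cor: equivalence of flat connections}. The only mild subtlety worth flagging is the Hausdorffness caveat mentioned in the remark of Section~\ref{sec: twisted fundamental groupoids}: when $(X_{\gamma(i)}, D_{\gamma(i)}) = (\mathbb{P}^1, p)$ the integration $\fml$ may fail to be Hausdorff, but as noted there this does not affect the construction provided one allows the space of arrows of a Lie groupoid to be non-Hausdorff, and Proposition~\ref{prop: Morita equivalence and representations} and Proposition~\ref{prop: Lie II} apply in that generality as well.
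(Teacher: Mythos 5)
Your proposal is correct and follows essentially the same route as the paper's own proof: apply \Cref{prop: Morita equivalence and representations} to $Q_\gamma$ to obtain the equivalence $\text{Rep}(\fsl) \cong \text{Rep}(\fel)$, then compose on both sides with the generalized Riemann--Hilbert correspondence recalled in \Cref{sec: twisted fundamental groupoids}. Your additional remarks on the independence of the choice of contractible thickening and on the non-Hausdorff caveat are consistent with what the paper says elsewhere but are not part of its (shorter) proof.
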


\begin{proof}
    By using \Cref{prop: Morita equivalence and representations}, from $Q_\gamma$ we get an equivalence of categories \\ $\text{Rep}(\Pi_1(X_{\gamma(0)}, D_{\gamma(0)})) \xrightarrow{\cong} \text{Rep}(\Pi_1(X_{\gamma(1)}, D_{\gamma(1)}))$. We can then compose this on both sides with the generalized Riemann-Hilbert correspondence of \cite{Stokes groupoids} as written in \Cref{sec: twisted fundamental groupoids} above to get an equivalence of categories $\mathbb{P}_\gamma: \text{FlatConn}_{X_{\gamma(0)}}(D_{\gamma(0)}) \to \text{FlatConn}_{X_{\gamma(1)}}(D_{\gamma(1)})$.
\end{proof}

\subsection{Construction of the morphism of Morita equivalences in the logarithmic case}\label{subsec: log morphism}
We have set up the previous sections to be able to mimic the arguments to construct the morphism of Morita equivalences in the logarithmic case. For this section, fix a homotopy of paths $\gamma \overset{H}{\sim} \gamma'$ fixing endpoints.

\begin{notation}
    Just as we denoted the Morita equivalence in the logarithmic case by $Q_\gamma$ again, we will denote the morphism of Morita equivalences associated to a homotopy $H$ by $\eta_H: Q_\gamma \to Q_{\gamma'}$ again.
\end{notation}

Just as in \Cref{subsec:morphism of Morita equivalences} in the nonsingular case, choose a contractible open subset denoted $\mathbb{D}^2$ of $S \times \mathbb{C}$ which contains $Gr(H)$, with the latter being contractible since it is homeomorphic to the domain of $H$ given by $B^2$. As before, choose this to extend $\mathbb{D}_{\gamma}$ and $\mathbb{D}_{\gamma'}$ which `thicken' $\gamma$ and $\gamma'$ respectively. Then one can form the pullback family:

\[\begin{tikzcd}
	{(H^*X, H^*D) \overset{\text{def}}{=} (X \times_S \mathbb{D}^2, D \times_S \mathbb{D}^2)} & {(X,D)} \\
	{\mathbb{D}^2} & S
	\arrow[from=1-1, to=1-2]
	\arrow["{\pi_H}"', from=1-1, to=2-1]
	\arrow["p", from=1-2, to=2-2]
	\arrow["H"', from=2-1, to=2-2]
\end{tikzcd}\]

The inclusions of pairs
\[(\Gamma^*X, \Gamma^*D) \hookrightarrow (H^*X, H^*D) \hookleftarrow (\Gamma'^*X, \Gamma'^*D)
\]
induce maps of twisted fundamental groupoids
\[ \fptl \xrightarrow{\iota_\gamma} \fhl \xleftarrow{\iota_{\gamma'}} \fptsl\]
These are defined exactly as in \Cref{subsec:log Q_gamma}: one first looks at the maps on logarithmic tangent bundles induced by the differential of the inclusions above, and then integrates these by \Cref{prop: Lie II} to get maps of their source-simply connected integrations. 

We need an analogue of \Cref{lemma: inclusion into homotopy total space is equivalence}, which was exactly the tool needed to construct the morphism of Morita equivalences in the nonsingular case.

\begin{lemma}\label{lemma: inclusion into logarithmic homotopy total space is equivalence}
    The maps $\fptl \xrightarrow{\iota_\gamma} \fhl$ and \\
    $\fhl \xleftarrow{\iota_{\gamma'}} \fptsl$ are fully faithful as functors.
\end{lemma}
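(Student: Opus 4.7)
The plan is to observe that $\Gamma^*X \subset H^*X$ is an open inclusion (since $\mathbb{D}_\gamma \subset \mathbb{D}^2$ is open by construction), show that $\fhl|_{\Gamma^*X}$ is source-simply-connected, deduce that the natural map $\fptl \to \fhl|_{\Gamma^*X}$ coming from \Cref{prop: Lie II} is an isomorphism, and conclude by noting that the inclusion $\fhl|_{\Gamma^*X} \hookrightarrow \fhl$ is tautologically fully faithful as the inclusion of a full subgroupoid.

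Since $\Gamma^*X \subset H^*X$ is open, $\fhl|_{\Gamma^*X}$ is the open restriction, and its Lie algebroid is $T_{H^*X}(-\log H^*D)|_{\Gamma^*X} = T_{\Gamma^*X}(-\log \Gamma^*D)$, the same Lie algebroid as that of $\fptl$. By \Cref{prop: Lie II}, since $\fptl$ is source-simply-connected by definition, the induced map $\fptl \to \fhl|_{\Gamma^*X}$ is a covering map of arrow spaces, which will be an isomorphism precisely when $\fhl|_{\Gamma^*X}$ is also source-simply-connected.

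To verify source-simple-connectedness, fix $x \in \Gamma^*X$ and denote by $O_x$ its $\fhl$-orbit. The source fibre of $\fhl|_{\Gamma^*X}$ at $x$ is the preimage of $O_x \cap \Gamma^*X$ under the universal covering $t: s_{\fhl}^{-1}(x) \to O_x$, and this preimage is simply-connected precisely when the inclusion $O_x \cap \Gamma^*X \hookrightarrow O_x$ induces an isomorphism on $\pi_1$. For $x \in \Gamma^*X \setminus \Gamma^*D$, this inclusion is $\Gamma^*X \setminus \Gamma^*D \hookrightarrow H^*X \setminus H^*D$: because $H^*D \to \mathbb{D}^2$ is a trivial covering (by contractibility of $\mathbb{D}^2$), both $\pi|_{\Gamma^*X \setminus \Gamma^*D}$ and $\pi_H|_{H^*X \setminus H^*D}$ remain fibrations with contractible bases, so both total spaces are homotopy equivalent to the common fibre $X_\tau \setminus D_\tau$ via the inclusion of a fibre, yielding the desired $\pi_1$-isomorphism. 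For $x \in \Gamma^*D$, the orbit inclusion reduces to $\{f\} \times \mathbb{D}_\gamma \hookrightarrow \{f\} \times \mathbb{D}^2$, an inclusion of contractible spaces, which trivially induces an isomorphism on $\pi_1$.

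Finally, the Hom-sets of $\fhl|_{\Gamma^*X}$ between objects $x, y \in \Gamma^*X$ are equal to those of $\fhl$ by definition of the restricted groupoid, so the inclusion $\fhl|_{\Gamma^*X} \hookrightarrow \fhl$ is fully faithful as a functor; composing with the isomorphism $\fptl \xrightarrow{\sim} \fhl|_{\Gamma^*X}$ gives that $\iota_\gamma$ is fully faithful. The same argument with $\gamma'$ in place of $\gamma$ handles $\iota_{\gamma'}$. The main obstacle is the source-simple-connectedness check for $x \in \Gamma^*X \setminus \Gamma^*D$, which hinges on the restriction of $\pi_H$ to the complement of $H^*D$ remaining a fibration with contractible base — this being the point where the trivial covering structure of $H^*D$ over $\mathbb{D}^2$ is essential.
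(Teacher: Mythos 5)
Your proposal shares its skeleton with the paper's proof (identify the Lie algebroid of $\fhl|_{\Gamma^*X}$ with that of $\fptl$, integrate the identity by \Cref{prop: Lie II} to get a covering map $\fptl \to \fhl|_{\Gamma^*X}$, and deduce full faithfulness from the tautological fullness of the restriction), but you replace the paper's middle step — injectivity on the dense open complement of the divisor via \Cref{lemma: inclusion into homotopy total space is equivalence}, plus the fact that a covering map injective on a dense open is injective everywhere — by a direct verification that $\fhl|_{\Gamma^*X}$ is source-simply connected. That route is viable, but as written it has a genuine gap at the divisor points. You assert that the target map $t\colon s^{-1}(x) \to O_x$ (source fibre of $\fhl$ onto the orbit) is the universal covering, and deduce that the restricted source fibre is $1$-connected precisely when $O_x \cap \Gamma^*X \hookrightarrow O_x$ is a $\pi_1$-isomorphism. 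This is correct only when the isotropy group $\mathcal{G}_x$ is discrete, i.e.\ for $x \notin H^*D$. For $x$ on the divisor, the isotropy of the twisted fundamental groupoid is positive-dimensional (for $\Pi_1(\mathbb{C},0)$ the isotropy at $0$ is a copy of $\mathbb{C}$), so $t$ restricted to the source fibre is a principal $\mathcal{G}_x$-bundle with one-dimensional fibres, not a covering, and your ``precisely when'' criterion is unjustified exactly in the case you dispose of most quickly. The conclusion is salvageable: the restricted source fibre at a divisor point is the restriction of this principal $\mathcal{G}_x$-bundle to $O_x \cap \Gamma^*X \cong \{f\}\times\mathbb{D}_\gamma$, which is contractible, hence is homotopy equivalent to $\mathcal{G}_x$; and $\mathcal{G}_x$ is connected and simply connected by the long exact sequence of the bundle $\mathcal{G}_x \to s^{-1}(x) \to O_x$, since $s^{-1}(x)$ is $1$-connected and $O_x \cong \{f\}\times\mathbb{D}^2$ is contractible. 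But this extra argument must be supplied; as stated, the divisor case rests on a false identification.

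Two further remarks. First, your treatment of the non-divisor points relies on the punctured projections $\pi|_{\Gamma^*X\setminus\Gamma^*D}$ and $\pi_H|_{H^*X\setminus H^*D}$ being fibrations; this deserves a word (local triviality of the pair over $S$), but it is at the same level of rigor as the paper, which invokes the same fact, so I do not count it as a gap relative to the paper's own standard. Second, in your $\pi_1$-isomorphism criterion you are implicitly using surjectivity on $\pi_1$ to get connectedness of the restricted source fibres, which is needed for source-simple-connectedness; this is fine but should be made explicit. With the divisor-point argument repaired, your proof goes through and gives a self-contained alternative to the paper's dense-open injectivity argument, at the cost of the more delicate orbit-by-orbit analysis.
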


\begin{proof}
    Similarly to the proof of \Cref{thm: log Q_gamma is Morita equivalence}, we first apply \Cref{lemma: criterion for Morita subgroupoid} to get a Morita equivalence $t^{-1}(\Gamma^*X)$ between  $\fhl|_{\Gamma^*X}$ and $\fhl$, and then compose this Morita equivalence with the isomorphism $\iota_\gamma: \fptl \xrightarrow{\cong} \fhl|_{\Gamma^*X}$. This is an isomorphism by exactly the argument in the proof of \Cref{thm: log Q_gamma is Morita equivalence}; namely they have isomorphic Lie algebroids, and $\fptl$ is the source-simply connected integration, so by \Cref{prop: Lie II} we can integrate the identity map on Lie algebroids to get a covering map $\fptl \to \fhl|_{\Gamma^*X}$. This covering map is an injection on the dense open given by restricting to the complement of the divisor $\Gamma^*D$, because there it is just a map of fundamental groupoids of the complement for which injectivity follows by \Cref{lemma: inclusion into homotopy total space is equivalence}. A covering map injective on a dense open is an injection everywhere, and it is surjective because it is a covering map, giving us the required isomorphism of Lie groupoids.

\end{proof}

Now that we have a logarithmic analogue of \Cref{lemma: inclusion into homotopy total space is equivalence}, we can reproduce the construction of the morphism $\eta_H$ in the logarithmic setting.

\begin{theorem}\label{thm: log eta_H is morphism}
Let $\fptl \xrightarrow{\iota_\gamma} \fhl \xleftarrow{\iota_{\gamma'}} \fptsl$ be the maps induced by inclusion of pairs as defined above. Let $\eta_H: Q_\gamma \to Q_{\gamma'}$ be defined by $\eta_H(\alpha) = \iota_{\gamma'}^{-1} \circ \iota_\gamma(\alpha)$. Then $\eta_H$ is a morphism of Morita equivalences.    
\end{theorem}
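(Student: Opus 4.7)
The plan is to directly adapt the proof of \Cref{eta_H is morphism} to the logarithmic setting, using \Cref{lemma: inclusion into logarithmic homotopy total space is equivalence} in place of \Cref{lemma: inclusion into homotopy total space is equivalence}. Recall that from the preceding description, elements of $Q_\gamma$ are precisely those arrows $\alpha \in \fptl$ whose source lies in $X_{\gamma(0)}$ and whose target lies in $X_{\gamma(1)}$, and analogously for $Q_{\gamma'}$.

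First I would verify that $\eta_H$ is well-defined and lands in $Q_{\gamma'}$. For $\alpha \in Q_\gamma$, the arrow $\iota_\gamma(\alpha) \in \fhl$ satisfies $s(\iota_\gamma(\alpha)) = s(\alpha) \in X_{\gamma(0)} = X_{\gamma'(0)}$ and $t(\iota_\gamma(\alpha)) = t(\alpha) \in X_{\gamma(1)} = X_{\gamma'(1)}$, since $\iota_\gamma$ acts on objects by the inclusion $\Gamma^*X \hookrightarrow H^*X$ (it integrates a Lie algebroid map covering this inclusion). Both endpoints thus lie in $\Gamma'^*X$, so $\iota_\gamma(\alpha) \in \fhl|_{\Gamma'^*X}$. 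The proof of \Cref{lemma: inclusion into logarithmic homotopy total space is equivalence} in fact establishes that $\iota_{\gamma'}$ identifies $\fptsl$ isomorphically with $\fhl|_{\Gamma'^*X}$, so a unique preimage exists, and this is $\eta_H(\alpha)$. Its source and target agree with those of $\alpha$, placing $\eta_H(\alpha) \in Q_{\gamma'}$.

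Next I would check holomorphicity and biequivariance. Holomorphicity is immediate, as $\eta_H$ is the composition of the holomorphic map $\iota_\gamma$ with the inverse of the biholomorphic restriction of $\iota_{\gamma'}$ onto $\fhl|_{\Gamma'^*X}$. For left-equivariance, letting $\iota_0: \fsl \to \fptl$ and $\iota_0': \fsl \to \fptsl$ denote the respective inclusions, I would expand
\[ \eta_H(g * \alpha) = \iota_{\gamma'}^{-1}\bigl(\iota_\gamma(\iota_0(g)) \cdot \iota_\gamma(\alpha)\bigr). \]
The key compatibility is $\iota_\gamma \circ \iota_0 = \iota_{\gamma'} \circ \iota_0'$, which follows from the uniqueness clause of \Cref{prop: Lie II}: both composites integrate the differential of the common inclusion $(X_{\gamma(0)}, D_{\gamma(0)}) \hookrightarrow (H^*X, H^*D)$. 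Since $\iota_{\gamma'}^{-1}$ is a groupoid homomorphism on $\fhl|_{\Gamma'^*X}$, substituting this identity gives $\eta_H(g * \alpha) = \iota_0'(g) \cdot \eta_H(\alpha) = g * \eta_H(\alpha)$. Right-equivariance under $\fel$ follows by the symmetric argument with $\iota_1$ and $\iota_1'$.

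The main obstacle has effectively already been discharged by \Cref{lemma: inclusion into logarithmic homotopy total space is equivalence}: the isomorphism $\iota_{\gamma'}: \fptsl \xrightarrow{\cong} \fhl|_{\Gamma'^*X}$ is precisely what allows $\eta_H$ to be defined at all, and it is also what makes $\iota_{\gamma'}^{-1}$ a well-behaved groupoid homomorphism in the equivariance computation. Once this lemma is in hand, the present theorem reduces to tracking compatibilities between integrations of Lie algebroid inclusions, and the only slightly delicate point is the Lie II uniqueness identity $\iota_\gamma \circ \iota_0 = \iota_{\gamma'} \circ \iota_0'$, which is forced because both sides integrate a common algebroid map.
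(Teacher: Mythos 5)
Your proposal is correct and follows essentially the same route as the paper, whose proof simply says to repeat the argument of \Cref{eta_H is morphism} with \Cref{lemma: inclusion into logarithmic homotopy total space is equivalence} replacing \Cref{lemma: inclusion into homotopy total space is equivalence}. In fact you usefully make explicit a point the paper leaves implicit, namely that equivariance in the logarithmic setting rests on the identity $\iota_\gamma \circ \iota_0 = \iota_{\gamma'} \circ \iota_0'$, forced by the uniqueness clause of \Cref{prop: Lie II} since both composites integrate the differential of the same inclusion of pairs.
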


\begin{proof}
    The same argument as for \Cref{eta_H is morphism} works, using \Cref{lemma: inclusion into logarithmic homotopy total space is equivalence} instead of \Cref{lemma: inclusion into homotopy total space is equivalence}.
\end{proof}

\begin{corollary}

A homotopy of paths (fixing endpoints) $\gamma \overset{H}{\sim} \gamma'$ in $S$ induces a natural isomorphism $\hat{\eta}_H: \mathbb{P}_\gamma \Rightarrow \mathbb{P}_{\gamma'}$ between the functors induced by $Q_\gamma$ and $Q_{\gamma'}$.
    
\end{corollary}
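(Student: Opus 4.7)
The plan is to simply combine the previous theorem with the general machinery already set up for morphisms of Morita equivalences, and then transport the resulting natural isomorphism across the generalized Riemann-Hilbert correspondence. This corollary should essentially be a bookkeeping statement: all the geometric and algebroid-theoretic content lives in \Cref{thm: log eta_H is morphism}.

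First, I would apply \Cref{thm: log eta_H is morphism} to the homotopy $H$ to obtain the morphism of Morita equivalences $\eta_H : Q_\gamma \to Q_{\gamma'}$. Then I would invoke \Cref{prop: morphism of Morita equivalences induces natural isomorphism}, which directly produces from $\eta_H$ a natural isomorphism between the functors $\mathrm{Rep}(\Pi_1(X_{\gamma(0)}, D_{\gamma(0)})) \to \mathrm{Rep}(\Pi_1(X_{\gamma(1)}, D_{\gamma(1)}))$ induced respectively by $Q_\gamma$ and $Q_{\gamma'}$ via the pull-push construction of \Cref{prop: Morita equivalence and representations}. Call this natural isomorphism $\hat{\eta}_H^{\mathrm{Rep}}$.

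Finally, I would pre- and post-compose with the generalized Riemann-Hilbert equivalences $\mathrm{FlatConn}_{X_{\gamma(i)}}(D_{\gamma(i)}) \cong \mathrm{Rep}(\Pi_1(X_{\gamma(i)}, D_{\gamma(i)}))$ (for $i = 0, 1$) recalled in \Cref{sec: twisted fundamental groupoids}, which are the equivalences used in the definition of $\mathbb{P}_\gamma$ and $\mathbb{P}_{\gamma'}$. Since whiskering a natural isomorphism by equivalences of categories yields another natural isomorphism, we get the desired $\hat{\eta}_H : \mathbb{P}_\gamma \Rightarrow \mathbb{P}_{\gamma'}$.

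There is essentially no obstacle here since all the work has been done: the nontrivial geometric content, namely verifying biequivariance of $\eta_H$ and the fact that $\iota_{\gamma'}$ is an isomorphism onto $\fhl|_{\Gamma'^*X}$ so that $\iota_{\gamma'}^{-1}$ is well defined, has already been absorbed into \Cref{thm: log eta_H is morphism} and \Cref{lemma: inclusion into logarithmic homotopy total space is equivalence}. The only thing one might wish to remark on explicitly is that the naturality square for $\hat{\eta}_H$ on a representation $V$ commutes because the underlying map $\tilde{\eta}_{H,V} : p_{Q_\gamma}^* V \to p_{Q_{\gamma'}}^* V$ of \Cref{prop: morphism of Morita equivalences induces natural isomorphism} is the identity on fibres, and that this behavior is preserved under the Riemann-Hilbert functor since the latter is an equivalence.
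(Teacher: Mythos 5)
Your proposal is correct and matches the paper's proof, which likewise just applies \Cref{prop: morphism of Morita equivalences induces natural isomorphism} to the morphism $\eta_H$ produced in \Cref{thm: log eta_H is morphism}; your extra remark about whiskering with the generalized Riemann-Hilbert equivalences is the same bookkeeping the paper leaves implicit in the definition of $\mathbb{P}_\gamma$.
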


\begin{proof}
    This is just \Cref{prop: morphism of Morita equivalences induces natural isomorphism} applied to the morphism of Morita equivalences $\eta_H: Q_\gamma \to Q_{\gamma'}$ produced in \Cref{thm: log eta_H is morphism}.
\end{proof}

These constructions are all functorial in inputs from $\Pi_{\leq 2}(S)$, i.e. given a composition of paths $\gamma.\tilde{\gamma}$, there is an isomorphism of Morita equivalences $Q_{\gamma} \circ Q_{\tilde{\gamma}} \cong Q_{\gamma . \tilde{\gamma}}$, and given a composition of homotopies $H . \tilde{H}$, there is an equality $\eta_H \circ \eta_{\tilde{H}} = \eta_{H.\tilde{H}}$. These isomorphisms are provided by the structure maps of the 2-category of Lie groupoids themselves. For instance $Q_{\gamma \circ \tilde{\gamma}}$ consists of groupoid arrows which project down to the graph of $\gamma \circ \tilde{\gamma}$, which admit a map from $Q_{\gamma} \circ Q_{\tilde{\gamma}}$ since the latter composition was defined as $Q_\gamma \times_{X_{\gamma(1)}} Q_{\tilde{\gamma}} / \fe$. This map is an isomorphism since $Q_\gamma$ consists of groupoid arrows projecting to the graph of $\gamma$ and $Q_{\tilde{\gamma}}$ consists of arrows projecting to the graph of $\tilde{\gamma}$, and if two different compositions of such groupoid arrows map to the same arrow in $Q_{\gamma \circ \tilde{\gamma}}$, they must be related by an arrow in $\fe$, which is exactly what we are quotienting by to produce $Q_\gamma \circ Q_{\tilde{\gamma}}$.

This completes the construction of the 2-functor $\mathbb{P}$ categorifying logarithmic isomonodromy, and finishes the proof of \Cref{thm: main theorem}.

\begin{remark}

From $\gamma$ we produced a canonical equivalence of categories of logarithmic flat connections on the fibres $X_{\gamma(0)}$ and $X_{\gamma(1)}$. This gives that from isomonodromy along $\gamma$ we can produce a canonical isomorphism of the moduli \textit{prestacks} giving logarithmic flat connections on $(X_{\gamma(0)}, D_{\gamma(0)})$ and $(X_{\gamma(1)}, D_{\gamma(1)})$, without even passing to stacks. This upgrades the picture of isomonodromy giving maps between coarse moduli spaces of flat connections with logarithmic singularities.
    
\end{remark}

\section{$\pi_2$-action via the first Hirzebruch surface}\label{sec: Hirzebruch surface}

If we specialize the construction in the main theorem to the constant path $\gamma = e_s$ for $s \in S$, we see that the constant path (which is an identity arrow in $\Pi_{\leq 2}(S)$) must be mapped by $\mathbb{P}$ to the \textit{identity} Morita equivalence of the twisted fundamental groupoid $\Pi_1(X_s, D_s)$ of the fibre over $s$. Note that this is given by the space of arrows of the groupoid itself, with left and right actions corresponding to groupoid multiplication. Evaluating the $\text{Rep}$ functor on this identity Morita equivalence gives us the identity functor on the category of logarithmic flat connections on $X_s$ with singularities along $D_s$.

Looking at the second homotopical level, we get that self-homotopies of the constant path $e_s$, which are elements of $\pi_2(S,s)$, are mapped by $\mathbb{P}$ to automorphisms of the identity Morita equivalence, which are then mapped by $\text{Rep}$ to automorphisms of the identity functor on the category of logarithmic flat connections on $X_s$. Concretely, these are given by families of automorphisms $\hat{\eta}_{(E,\nabla)}: (E, \nabla) \to (E, \nabla)$ which are compatible with any morphism between any two connections. Notice in particular that here we have \textit{not} punctured away the divisor $D$, i.e. the Betti data of such connections is not just given by a monodromy representation, but also has logarithmic singular information along the divisor. We are producing automorphisms of all of this data from the $\pi_2(S,s)$ action. 

If one were to instead puncture away the divisor $D$, we would be getting a map:
\[ \pi_2(S,s) \to \text{Aut}(\Pi_1(X_s\backslash D_s))
\]
Here the right-hand side is automorphisms as a Morita equivalence (and not as a groupoid). If we now pick a point $x$ in the fibre $X_s \backslash D_s$, we can evaluate such an automorphism at the identity arrow at $x$ to get an arrow in $\Pi_1(X_s \backslash D_s)$ whose source and target are both $x$ (since a morphism of Morita equivalences commutes with the moment maps to base of the groupoid, which in this case of the identity Morita equivalence, are just the source and target maps themselves). Such an arrow with both source and target being $x$ is an element of the fundamental \textit{group} $\pi_1(X_s \backslash D_s, x)$. Thus we get a map:
\[\text{Aut}(\Pi_1(X_s\backslash D_s)) \to \pi_1(X_s \backslash D_s, x)\]
that we can compose with the above map from $\pi_2(S,s)$ to get a map:
\[ \pi_2(S,s) \to \pi_1(X_s \backslash D_s, x)\]
This map is just the connecting homomorphism of the fibration $X \backslash D \to S$.

Thus in the logarithmic case, we can interpret the $\pi_2(S,s)$ action as generalizing the connecting homomorphism of such a fibration while keeping track of logarithmic singular data along the divisor $D$.

\begin{example}

For an explicit example, consider $X = \mathbb{F}_1$, $D = D_0 + D_1$, and $S = \mathbb{P}^1$. Here $\mathbb{F}_1$ is the first Hirzebruch surface, defined as the $\mathbb{P}^1$-bundle over $\mathbb{P}^1$ given by $\mathbb{P}(\mathcal{O}_{\mathbb{P}^1} \oplus \mathcal{O}_{\mathbb{P}^1}(1))$.
This is isomorphic to the blow-up of $\mathbb{P}^2$ at a single point. $D_0$ and $D_1$ are the two canonical sections of this bundle given by the projectivizations of the direct summands $\mathcal{O}$ and $\mathcal{O}(1)$ respectively, and their sum gives a reduced smooth divisor $D = D_0 + D_1$ in $\mathbb{F}_1$. This divisor gives two distinct points in every fibre; selecting a point in the base $s \in \mathbb{P}^1$, we can select a coordinate on the fibre $X_s = \mathbb{P}^1$ which identifies the divisor $D_s$ with $0 + \infty$.

If we puncture away the divisor, the fibration is given by:
\[ \mathbb{C}^* \hookrightarrow X^{\circ} = \mathbb{F}_1 \backslash D_0 + D_1 \xrightarrow{p} \mathbb{P}^1 \]

By deformation retracting the fibre $\mathbb{C}^*$ to an $S^1$, we have that the total space $X^{\circ}$ is homotopy equivalent to $S^3$. The connecting homomorphism of this fibration is thus an isomorphism:
\[ \dots \rightarrow \cancelto{0}{\pi_2(X^{\circ}, x)} \to \pi_2(\mathbb{P}^1, s) \xrightarrow{\cong} \pi_1(\mathbb{C}^*, x) \to \cancelto{0}{\pi_1(X^{\circ}, x)} \to \dots
\]

This gives an example of the $\pi_2(S,s)$-action being nontrivial for families of the type we are discussing. If we now do \textit{not} puncture away the divisor $D_0+D_1$, then the discussion about $\pi_2(S,s)$ acting by automorphisms of the identity functor on the category of logarithmic flat connections on the fibre $X_s$ specializes to give a map:
\[ \pi_2(\mathbb{P}^1,s) \to \text{Aut}(\text{Id}_{\text{FlatConn}_{\mathbb{P}^1}(0+\infty)})
\]
where the right-hand side is the identity functor on the category of flat connections on $\mathbb{P}^1$ with logarithmic singularities at $0$ and $\infty$.
    
\end{example}

\section{Future work}\label{sec: future work}

This paper deals with the categorification of isomonodromic deformations of flat connections on complex curves with logarithmic singularities, i.e. for reduced divisor $D$. In a sequel paper, we will aim to categorify isomonodromic deformations for flat connections with \textit{irregular} singularities instead, i.e. for non-reduced divisor $D$.

In \cite{Boalch published thesis}, Philip Boalch shows that isomonodromy gives a flat \textit{symplectic} Ehresmann connection on the bundle of coarse moduli spaces of flat connections with irregular singularities. This means that the map of coarse moduli spaces associated to $\gamma$ is a symplectomorphism. In future work, we hope to extend the symplectic structure to the entire moduli \textit{stack} of flat connections with singularities, in both the logarithmic and irregular case, and show that isomonodromy along a path $\gamma$ gives a symplectic map of such stacks.

\end{document}